\documentclass{svproc}
\usepackage{empheq}
\usepackage[margin=1in]{geometry}         %nohead       % See geometry.pdf to learn the layout options. There are lots.'

\geometry{letterpaper}
\usepackage{cite}
\usepackage{empheq}
\usepackage{fullpage}
\usepackage{longtable}
\usepackage{pdfpages}
\usepackage{caption,subcaption}
\captionsetup{compatibility=false}
\usepackage{stmaryrd}
\usepackage{framed}
\usepackage{amssymb}
\usepackage{epstopdf}
\usepackage{amscd}
\usepackage{amsmath,amsfonts,amssymb}
\usepackage{enumerate}
\usepackage{multirow}
\usepackage{subcaption}
\usepackage{csquotes}
\usepackage{fixltx2e}
\usepackage{bbm}
\usepackage{url}

\usepackage{cite}
\usepackage{makeidx}
\usepackage{fullpage}
\usepackage{fancyhdr}
\usepackage{empheq}
\usepackage{color}
\usepackage{float}
\usepackage{soul}
\usepackage[doc]{optional}
\usepackage{enumitem}
\usepackage[toc,page]{appendix}
\usepackage{theorem}
\usepackage{array}
\usepackage{booktabs}
\usepackage{epsfig}
\usepackage{latexsym}
\usepackage{subcaption}
\usepackage{adjustbox}
\usepackage[percent]{overpic}

%TIKS PACKAGES
\usepackage{tikz}
\usetikzlibrary{decorations.pathmorphing}
%END
\usepackage{graphicx, xcolor, soul}

%% STYLE

\parindent  4mm
\parskip    4pt % was 10pt
\tolerance  3000

%\definecolor{labelkey}{rgb}{0,0.08,0.45}
%\definecolor{refkey}{rgb}{0,0.6,0.0}
%%%%%%%%%%%%%%%%%

\setlength\extrarowheight{5pt}

\DeclareGraphicsRule{.tif}{png}{.png}{`convert #1 `dirname #1`/`basename #1 .tif`.png}

\definecolor{myblue}{rgb}{.9, .9, 1}

\DeclareMathOperator{\proj}{P}
\DeclareMathOperator{\dm}{d}

\newcommand{\lev}{{\rm lev}}

\newcommand{\A}{\mathcal{A}}
\newcommand{\B}{\mathcal{B}}

%NEW THEOREMS

%\newtheorem{linesr}{Linesearch}

\theoremstyle{plain}{\theorembodyfont{\rmfamily}

\theoremstyle{plain}{\theorembodyfont{\rmfamily}
}
\theoremstyle{plain}{\theorembodyfont{\rmfamily}
}
\theoremstyle{plain}{\theorembodyfont{\rmfamily}
}
\theoremstyle{plain}{\theorembodyfont{\rmfamily}

\theoremstyle{plain}{\theorembodyfont{\rmfamily}

\theoremstyle{plain}{\theorembodyfont{\rmfamily}

\theoremstyle{plain}{\theorembodyfont{\rmfamily}

\theoremstyle{plain}{\theorembodyfont{\rmfamily}

\theoremstyle{plain}

%%%%%%%%%%%%%%%%%%%%%%%%%%%%%%%%%%%%%%%%%%%%%%%%%
%%%%%%  MACROS   %%%%%%%%%%%%%%%%%%%%%%%%%%%%%%%%

\newcommand{\Fix}{\ensuremath{\operatorname{Fix}}}
\newcommand{\Id}{\ensuremath{\operatorname{Id}}}

\newcommand{\bdry}{\ensuremath{\operatorname{bd}}}

\def\RR{{\mathbb{R}}}

\def\NN{{\mathbb{N}}}

\newcommand{\argmin}{{\rm arg}\!\min}

\newcommand{\nexto}{\kern -0.54em}

\newcommand{\dZ}{{\cal Z \kern -0.7em Z}}
\newcommand{\dC}{{\rm\hbox{C \kern-0.8em\raise0.2ex\hbox{\vrule height5.4pt width0.7pt}}}}
\newcommand{\dQ}{{\rm\hbox{Q \kern-0.85em\raise0.25ex\hbox{\vrule height5.4pt width0.7pt}}}}

\newcommand{\HH}{\mathcal{H}}
\newcommand{\PP}{\mathcal{P}}

\newcommand{\ZZ}{\mathbb{Z}}

%%%%%%%%%%%%%%%%%%%%%%%%%%%%%%%%%%%%%%%%%%%%%%%%%

\providecommand{\U}[1]{\protect\rule{.1in}{.1in}}
%EndMSIPreambleData
\providecommand{\U}[1]{\protect\rule{.1in}{.1in}}
\providecommand{\U}[1]{\protect\rule{.1in}{.1in}}
%\newenvironment{proof}[1][Proof]{\textbf{#1.} }{\ \rule{0.5em}{0.5em}}
%\newcommand{\Rea}{\mathop{\rm Re}\nolimits}

%TO DO NOTES PACKAGE
\usepackage[disable,colorinlistoftodos,prependcaption,textsize=tiny]{todonotes}

\begin{document}
\mainmatter
\title{Comparing Averaged Relaxed Cutters and Projection Methods: Theory and Examples}

\titlerunning{Cutters vs. Projections}

\author{R. D\'iaz Mill\'an\inst{1} \and Scott B. Lindstrom\inst{2} \and Vera Roshchina\inst{3}}

%\author{R. D\'iaz Mill\'an\thanks{Federal Institute of Goi\'as and IME Federal University of Goi\'as}, Scott Lindstrom\thanks{CARMA, University of Newcastle} and  Vera Roshchina\thanks{School of Science, RMIT University}}

\institute{Federal Institute of Goi\'as and IME Federal University of Goi\'as, Brazil\\
	\email{rdiazmillan@gmail.com}
	\and
	CARMA, University of Newcastle, Australia\\
	\email{scott.lindstrom@uon.edu.au}
	\and 
	RMIT University, Australia\\
	\email{vera.roshchina@rmit.edu.au}
	}

\maketitle

\begin{abstract}
We focus on the convergence analysis of averaged relaxations of cutters, specifically for variants that---depending upon how parameters are chosen---resemble \emph{alternating projections}, the \emph{Douglas--Rachford method}, \emph{relaxed reflect-reflect}, or the \emph{Peaceman--Rachford} method. Such methods are frequently used to solve convex feasibility problems. The standard convergence analyses of projection algorithms are based on the \emph{firm nonexpansivity} property of the relevant operators. However if the projections onto the constraint sets are replaced by cutters (projections onto separating hyperplanes), the firm nonexpansivity is lost. We provide a proof of convergence for a family of related averaged relaxed cutter methods under reasonable assumptions, relying on a simple geometric argument. This allows us to clarify fine details related to the allowable choice of the relaxation parameters, highlighting the distinction between the exact (firmly nonexpansive) and approximate (strongly quasi-nonexpansive) settings. We provide illustrative examples and discuss practical implementations of the method. 
\end{abstract}

\section{Introduction}

Projection and reflection methods are used for solving the \emph{feasibility} problem of finding a point in the intersection of a finite collection of closed, convex sets in a Hilbert space. Such problems have a wide range of application in variational analysis, optimisation, physics and mathematics in general.  One of the most successful methods from this class is the Douglas--Rachford method that uses a combination of reflections and averaging on each iteration. The idea first appeared in \cite{DR} as a numerical scheme for solving differential equations, and the convergence of a more general scheme for finding a zero of the sum of two maximally monotone operators was framed in \cite{LM} (also see \cite[Chapter 26]{Bau} for a modern treatment). Arag\'on Artacho and Campoy have recently introduced a modification of the Douglas--Rachford method for finding closest feasible points \cite{FranNewMethod}.

Convergence rates for such methods are the subject of extensive research; we provide a brief sampling. Under appropriate conditions, the Douglas--Rachford method converges in finitely many steps \cite{Bau}. Convergence rates may frequently be obtained through analysis of regularity conditions \cite{subt}. Additionally, semialgebraic structure admits further bounds on convergence rates for projection methods more generally \cite{DLW,AltSemialg,BLT2015} and for the Douglas--Rachford method in particular \cite{LP}. For a recent survey on the Douglas--Rachford method, see \cite{DRsurvey}.

The idea of replacing projections with their approximations, and specifically with the approximations constructed from the subdifferentials of the convex functions that describe the sets, was introduced by Fukushima \cite{Fukushima}. It has been used in various contexts recently, including the numerical solution of variational inequalities; see, for example, \cite{BelloCruzIusem,RelaxedProjection}. In particular, Combettes has used relaxation parameters together with subgradient projections in the construction of his \emph{extrapolation method of parallel subgradient projections} (EMOPSP) algorithm for image recovery \cite{combettes1997convex}. Of particular relevance are works of Arkady, Cegielski, Reich, and Zalas \cite{CRZ, reic, Cegielski}. Books which contain useful information about general cutters are those of Cegielski \cite{Cegielski}, Censor and Zenios \cite{censor1997parallel}, and Polyak \cite{polyak1987introduction}. 

The subgradient projector in particular is quite well studied; early contributions include the foundational work of Polyak \cite{polyak1987introduction} on subgradient projections and the analysis of the cyclic version by Censor \cite{censor1982cyclic}. Bauschke, C. Wang, X. Wang, and J. Xu provided characterizations of finite convergence in \cite{BWWX1} and a systematic study of the subgradient projector in \cite{BWWX2,BWWX3}. This sampling of the literature on subgradient projections is far from exhaustive; the interested reader is referred to  literature referenced in the latter works of Bauschke et al.

We consider the $2$ set feasibility problem of finding 
\begin{equation}\label{eq:feas}
u \in A \cap B
\end{equation}
for closed, convex subsets $A$ and $B$ of a Hilbert space $\HH$. In particular, we consider the behaviour of the dynamical systems which arise from iterated application of an operator $T$ that is a weighted average of the identity map and the composition of two relaxed cutters for the two sets in question. The Douglas--Rachford method (reflect-reflect-average), the Peaceman--Rachford method, alternating projections, and relaxed-reflect-reflect (RRR) are all special cases.

The \emph{goal} of the present work is threefold:
\begin{enumerate}
	\item We compare and contrast what is true of such operators in the special case where the cutters are projections (onto the constraint sets) with the more general case of cutters.
	\item In particular, we discuss nonexpansivity in the former setting and quasinonexpansivity in the latter, analysing what may be shown through each.
	\item We illustrate with examples, and we provide simple geometric arguments throughout the exposition.
\end{enumerate}

We would also like to highlight the recent work of Jonathan Borwein and his collaborators, who successfully applied the Douglas--Rachford method to a range of large-scale \emph{nonconvex} problems and studied its convergence \cite{DRMxcomp,ABT,JonBrailey,FranMattJonGlobal,BLSSS}. In Borwein's chapter of \emph{Tools and Mathematics: Instruments for Learning} \cite{hhm}, he included, along with his own commentary, a quote he particularly liked:
\begin{displayquote}
	Long before current graphic, visualisation and geometric tools were available,
	John E. Littlewood, 1885-1977, wrote in his delightful Miscellany:
	\begin{displayquote}
		A heavy warning used to be given [by lecturers] that pictures are not rigorous; this has never had its bluff called and has permanently frightened its victims into playing for safety. Some pictures, of course, are not rigorous, but I should say most are (and I use them whenever possible myself).\cite[p.53]{Littlewood}
	\end{displayquote}
\end{displayquote}

In this spirit, we present our results in a tutorial form, complete with many pictures and examples that highlight the geometric intuition underpinning them.

\subsubsection*{Outline}
We introduce the main concepts in Section~\ref{sec:prelims}. In Section~\ref{sec:main} we provide a simple proof of convergence to a feasible point for a method which averages the composition of two relaxed cutters with the identity. The parameterized method recovers alternating projections as one special case and the Douglas--Rachford method as a limiting, but not allowable, case. This comes as no surprise since projections onto constraint sets are a special case of cutters, and examples where the Douglas--Rachford method converges to fixed points which are not also feasible points are well known. 

With projections onto the constraint sets, the fixed points of the Douglas--Rachford operator have the handy property that they may be used to find feasible points in a single step. In Examples~\ref{eg:fixedpoints1} and~\ref{eg:fixedpoints2}, we show this may fail when projections onto the constraint sets are replaced with more general cutters. The elegance of this pairing is that the geometry illustrates why the proof fails if the limiting parameters are allowed, and the examples showcase what can then go wrong.

In Section~\ref{sec:imp} we provide several examples of implementations of the Douglas--Rachford method with cutters. \todo{Explain how exactly our main results relate to \cite{CRZ,Cegielski}. SKIPPING FOR NOW}

\section{Background and preliminaries}\label{sec:prelims}

Let $A$ and $B$ be two closed convex sets in a Hilbert space $\HH$. Given a starting point $x_0\in \HH$, the classic  method of alternating projections generates the sequence of points $\{x_n\}_{n \in \NN}$, where 
\begin{equation}\label{eq:alt-step}
x_n := (P_B \circ P_A)^n x_0 \quad \forall n \in \NN,
\end{equation}
and by $P_S$ we denote the Euclidean projection operator onto a closed convex set $S\subset \HH$, 
$$
P_S(x) = \argmin_{s\in S}\|s-x\|,
$$
which is well-defined (and single valued) for $S$ closed, convex, and nonempty. We assume these properties for all of our sets throughout. Observe (see Figure~\ref{fig:APandDR}~(a)) that each iteration of the method is the composition of projections onto the hyperplanes $H_A$ and $H_B$ that support the sets $A$ and $B$ at $P_A(x)$ and $P_B(x)$ respectively.
\begin{figure}[ht]
	\begin{subfigure}{.45\textwidth}		
	\begin{tikzpicture}[scale=3.0]
	%PLOT SMOOTHLY
	%\draw plot [smooth] coordinates {(1.5,0) (.5,.5) (1,1)};
	%SHADE THE SET A
	\fill[cyan,fill opacity=0.4] (1-.25,1-.1) to (2.0-.25,0-.1)
	to [out=180,in=270] (.465-.25,.415-.1)
	to [out=90,in=205] cycle;
	\node at (1.1-.25,.2-.1) {$A$};
	
	%%SHADE THE SET B INTERSECT A
	%\fill[purple,fill opacity=0.4] (1.42,.3) to (1.2,.3) to (1.3,.5) to cycle;
	
	%SHADE THE SET B
	\fill[red,fill opacity=0.4] (1.42-.2,.3+.2) to (1.2-.2,.3+.2) to (1.3-.2,.5+.2) to (1.5-.2,.9+.2) to [out=60,in=135] (1.9-.2,.9+.2) to [out=315,in=0] (1.5-.2,.3+.2) to cycle;
	\node [] at (1.6,.9) {$B$}; 					
	
	%shifted by (1,.5)-(1.2,.3) = (-.2,.2)
	
	%OUTLINE THE SET
	%\draw[black] (1,1) to (1.5,0)
	%to [out=180,in=270] (.5,.5)
	%to [out=90,in=205] cycle;
	
	%DRAW H_A which has slope 2
	\draw [gray, dashed] (.625,1.25) -- (0,0);
	\node [above right] at (.625,1.25) {$H_A$};
	
	%DRAW x, PSx, and RSx
	\draw [fill,black] (-.25,.75) circle [radius=0.015];	
	\node [left] at (-.25,.75) {$x$};
	\draw [black,->] (-.2,.725) -- (.2,.525);
	\draw [fill,black] (.25,.5) circle [radius=0.015];
	\node [left] at (.25,.45) {$P_Ax$};

	%DRAW RBx
	\draw [black,->] (.325,.5) -- (.925,.5);
	\draw [fill,black] (1,.5) circle [radius=0.015];
	\node [below left] at (1,.5) {$P_B P_A x$};
	
	%DRAW H_B which has slope infty
	\draw [gray, dashed] (1,1.25) -- (1,0);
	\node [right] at (1,1.25) {$H_B$};
	
	\end{tikzpicture}
	\caption{One step of alternating projections}\label{fig:APandDRleft}
	\end{subfigure}
	\begin{subfigure}{.45\textwidth}
		\begin{tikzpicture}[scale=3.0]
		%PLOT SMOOTHLY
		%\draw plot [smooth] coordinates {(1.5,0) (.5,.5) (1,1)};
		%SHADE THE SET A
		\fill[cyan,fill opacity=0.4] (1-.25,1-.1) to (2.0-.25,0-.1)
		to [out=180,in=270] (.465-.25,.415-.1)
		to [out=90,in=205] cycle;
		\node at (1.1-.25,.2-.1) {$A$};
		
		%%SHADE THE SET B INTERSECT A
		%\fill[purple,fill opacity=0.4] (1.42,.3) to (1.2,.3) to (1.3,.5) to cycle;
		
		%SHADE THE SET B
		\fill[red,fill opacity=0.4] (1.42-.2,.3+.2) to (1.2-.2,.3+.2) to (1.3-.2,.5+.2) to (1.5-.2,.9+.2) to [out=60,in=135] (1.9-.2,.9+.2) to [out=315,in=0] (1.5-.2,.3+.2) to cycle;
		\node [] at (1.65,1) {$B$}; 					
		
		%shifted by (1,.5)-(1.2,.3) = (-.2,.2)
		
		%OUTLINE THE SET
		%\draw[black] (1,1) to (1.5,0)
		%to [out=180,in=270] (.5,.5)
		%to [out=90,in=205] cycle;
		
		%DRAW H_A which has slope 2
		\draw [gray, dashed] (.625,1.25) -- (0,0);
		\node [above right] at (.625,1.25) {$H_A$};
		
		%DRAW x, PSx, and RSx
		\draw [fill,black] (-.25,.75) circle [radius=0.015];	
		\node [left] at (-.25,.75) {$x$};
		\draw [black,->] (-.2,.725) -- (.2,.525);
		\draw [fill,black] (.25,.5) circle [radius=0.015];
		\node [left] at (.25,.45) {$P_Ax$};
		\draw [blue,->] (.3,.475) -- (.7,.275);
		\draw [fill,blue] (.75,.25) circle [radius=0.015];	
		\node [left, blue] at (.75,.2) {$R_A^{\gamma=0} x$};

		%DRAW RBx
		\draw [blue,->] (1.05,.55) -- (1.2,.7);
		\draw [fill,blue] (1.25,.75) circle [radius=0.015];
		\node [above,blue] at (1.25,.75) {$R_B^{\gamma=0} R_A^{\gamma=0} x$};
		\draw [black,->] (.8,.3) -- (.95,.45);
		\draw [fill,black] (1,.5) circle [radius=0.015];
		\draw [blue,->] (1.05,.55) -- (1.2,.7);
		\node [right] at (1,.5) {$P_B R_A^{\gamma=0} x$};
		
		%DRAW H_B which has slope 1
		\draw [gray, dashed] (.25,1.25) -- (1.6,-.1);
		\node [above left] at (.25,1.25) {$H_B$};
		
		%DRAW my averaging step
		\draw [purple, <->] (-.2,.75) -- (1.2,.75);
		\draw [fill,purple] (.5,.75) circle [radius=0.015];
		\node [below,purple] at (.5,.75) {$T_{A,B}x$};
		
		\end{tikzpicture}
		\caption{One step of Douglas--Rachford method}\label{fig:APandDRright}
	\end{subfigure}
	\caption{The operator $T_{A^\gamma,B^\gamma}^\lambda$ for different values of $\gamma, \lambda$.}\label{fig:APandDR}
\end{figure}
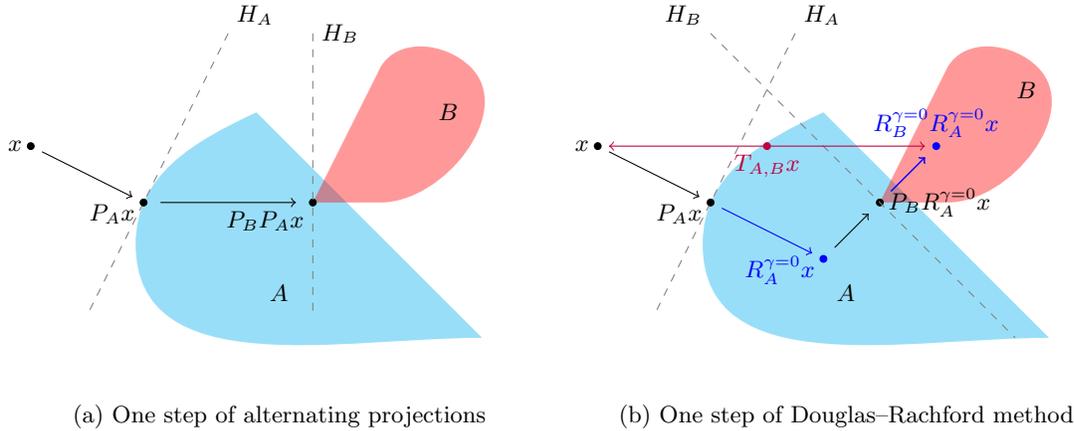

On each step of the classic Douglas--Rachford algorithm the previous iterate is first \emph{reflected} through $H_A$, then reflected through  $H_B$, and finally the resulting point is averaged with the previous iterate; see Figure~\ref{fig:APandDR}~(b). In this case our iterative sequence $\{x_n\}_{n \in \NN}$ is defined as 
\begin{equation}\label{eq:DR-step}
x_n := \left(\frac{1}{2}\left((2P_B-\Id) \circ (2P_A-\Id)\right)+\frac{1}{2}\Id \right)^n x_0  \quad \forall n \in \NN.
\end{equation}

The reflection can be replaced by a \emph{relaxed projection} which we denote by $R_S^\gamma$. For a fixed \emph{reflection parameter} $\gamma \in [0,2)$ we let
\begin{equation}\label{eq:defRS}
R_S^\gamma :=(2-\gamma)(P_S -\Id)+\Id.
\end{equation}
Observe that when $\gamma = 0$, the operator $R_S^{\gamma=0} = 2 P_S -\Id$ is the standard \emph{reflection} that we saw earlier, for $\gamma=1$ we obtain the \emph{projection}, $R_S^{\gamma=1} =  P_S$. For $\gamma\in (1,2)$ the operator $R_S^\gamma$ can be called an \emph{under-relaxed projection} following \cite{DePierro}. For $\gamma\in (0,1)$ it may be called an \emph{over-relaxed projection}.

In addition to using  relaxed projections as in \eqref{eq:defRS}, the averaging step of the Douglas--Rachford iteration \eqref{eq:DR-step} can also be relaxed by choosing an arbitrary point on the interval between the second reflection and the initial iterate. This can be parametrised by some $\lambda \in (0,1]$. We can hence define a $\lambda$-averaged relaxed sequence $\{x_n\}_{n \in \NN}$ by
\begin{align}
x_n &:= \left(T_{A^\gamma,B^\mu}^\lambda \right)^n x_0, \nonumber \\
\text{where}\quad T_{A^\gamma,B^\mu}^\lambda &:= \lambda(R_B^\mu \circ R_A^\gamma)+(1-\lambda)\Id. \label{DRsequence}
\end{align}
 When $\lambda=1$ and $\gamma = \mu = 1$, this is the sequence generated by alternating projections \eqref{eq:alt-step}. For $\gamma = \mu = 0$, this is the Douglas--Rachford method \eqref{eq:DR-step}, and for $\lambda = 1$ the Peaceman--Rachford method. The case where $\gamma = \mu = 0$ and $\lambda$ is flexible is often referred to as \emph{relaxed-reflect-reflect} or RRR \cite{elser2017matrix}. If $\gamma = \frac{2(\eta+1)}{2\eta+1}$, then 
 $$
 R_S^\gamma = \left(\frac{1}{2\eta+1}\Id + \frac{2\eta}{2\eta+1}P_S \right)
 $$
 may be recognized as the form in which the relaxation was presented by Borwein, Li, and Tam for their damped Douglas--Rachford variant \cite{BLT2015}.

We note that the framework introduced here does not cover all possible projection methods. For example, one may want to vary the parameters $\gamma$, $\mu$ and $\lambda$ on every step, or consider other variations of Douglas--Rachford-like operators (e.g. see \cite{FranNewMethod}).

We recall the definition of a cutter (see \cite[Definition 2.1.30]{Cegielski}).

\begin{definition}\label{def:cutter}
	Where $x,y \in \HH$, we say that $y$ separates $S$ from $\HH$ if $\langle x-y,z-y \rangle \leq 0$ for all $z \in S$. We call $T:\HH \rightarrow \HH$ a \emph{cutter} if $y:=Tx$ separates $\Fix T \neq \emptyset$ from $x$ for all $x \in H$. In other words,
\begin{equation}\label{eq:fixed-cutter}
	(\forall x \in \HH) \;(\forall z \in \Fix T) \quad \langle x - Tx, z-Tx \rangle \leq 0.
\end{equation}
\end{definition}
A cutter may be thought of as a map which assigns $x$ to its projection onto a chosen separating hyperplane, as illustrated in Figure~\ref{fig:subdifferentialb}. The Euclidean projection operator $P_S$ for a closed, convex set $S$ is an example of a cutter where the separating hyperplane is a supporting hyperplane to $S$, as illustrated in Figure~\ref{fig:APandDR} for alternating projections at left and the Douglas--Rachford method at right.

We note that \eqref{eq:fixed-cutter} is essential for cutter based projection methods. We have the following elementary example that illustrates this.

\begin{example} In the one-dimensional real setting assume that $S = (-\infty,0]$ and 
$$
T(x) = 
\begin{cases}
x, & x\in (-\infty, 0],\\
0, & x\in (0,1),\\
1, & x\in [1,+\infty).
\end{cases}
$$
Observe that $y = T(x)$ is a separator, however, it is not a cutter: the point $x=1\notin S$ is a fixed point of $T$, and for $x\in (0,1)$ the point $T(x) =0$ does not separate the fixed points of $T$ from $x$. 
\end{example}

A useful implementation of a cutter is the subgradient projection operator for a convex function $f$, which we recall in the following definition from \cite[Definition 2.2]{BWWX3}, where $\partial f$ denotes the usual Moreau-Rockafellar subdifferential of $f$.

\begin{definition}\label{def:subgradientprojector}Let $f: \HH \rightarrow \RR$ be lower semicontinuous and subdifferentiable. Let $s: \HH \rightarrow \RR$ be a selection for $\partial f$. Then the \emph{subgradient projector} of $f$ is
	\begin{equation}\label{eq:subgr}
	P_{\partial f}: \HH \rightarrow \HH : x \mapsto \begin{cases}x-\frac{f(x)}{\|s(x)\|^2}s(x) & \text{ if } f(x) > 0; \\
	x & \text{otherwise}.\end{cases}
	\end{equation}
\end{definition}
The subgradient projection operator is a cutter with $\Fix P_{\partial f} = \lev_{\leq 0}f$. We illustrate in Figure~\ref{fig:subdifferential}. In Figure~\ref{fig:subdifferentialright} we show the case where the selection operator $s$ is uniquely determined since $\partial f$ is single-valued everywhere. In Figure~\ref{fig:subdifferentialleft} we show two possible values for the subgradient projection of $x$; we emphasize that the subgradient projector a is \emph{single-valued operator}, and that the output depends on the chosen selection operator $s$ in Definition~\ref{def:subgradientprojector}.

	\begin{figure}[h]
		\begin{subfigure}{.3\textwidth}
			\begin{adjustbox}{trim=.5cm .4cm .85cm 2.5cm,clip=true}
				\begin{tikzpicture}[scale=2.5]
				\draw [gray] (-.5,0) -- (2,0);
				\draw [black] (-.5,-.25) -- (1,.5);
				\draw [black] (1,.5) -- (2,2.5);
				\draw [red,dashed] (.625,-.35) -- (1.5,1.4); %-.25 and 1.5 are y coord but I shift down
				\draw [blue,dashed] (0,-.05) -- (2,.95); %0, 1 are y coord but I shift down
				\draw [fill,black] (1,0) circle [radius=.025];
				\node [below] at (1,-.05) {$x$};
				\draw [purple,->] (1,.1) -- (1,.4);
				\draw [purple,->] (.95,.375) -- (.45,.065);
				\draw [fill,purple] (.4,0) circle [radius=.025];
				\draw [purple,->] (.95,.365) -- (.65,.07);
				\draw [fill,purple] (.6,0) circle [radius=.025];
				\end{tikzpicture}
			\end{adjustbox}
			\caption{Subgradient projection}\label{fig:subdifferentialleft}
		\end{subfigure}	
	\begin{subfigure}{.3\textwidth}
	\begin{tikzpicture}[scale=3.0]
	%PLOT SMOOTHLY
	%\draw plot [smooth] coordinates {(1.5,0) (.5,.5) (1,1)};
	
	%SHADE THE SET
	\fill[cyan,fill opacity=0.4] (1,1) to (1.5,0)
	to [out=180,in=270] (.5,.5)
	to [out=90,in=205] cycle;
	\node at (.9,.8) {$S$};
	
	%OUTLINE THE SET
	%\draw[black] (1,1) to (1.5,0)
	%to [out=180,in=270] (.5,.5)
	%to [out=90,in=205] cycle;
	
	%DRAW H which has slope 2
	\draw [gray, dashed] (.625,1.25) -- (0,0);
	\node [above right] at (.625,1.25) {$H$};
	
	%DRAW x, PSx, and RSx
	\draw [fill,black] (-.25,.75) circle [radius=0.015];	
	\node [left] at (-.25,.75) {$x$};
	\draw [black,->] (-.2,.725) -- (.2,.525);
	\draw [fill,black] (.25,.5) circle [radius=0.015];
	\node [left] at (.25,.45) {$\PP_Sx$};
	\draw [blue,->] (.3,.475) -- (.7,.275);
	\draw [fill,blue] (.75,.25) circle [radius=0.015];	
	\node [left, blue] at (.75,.2) {$\mathcal{R}^{\gamma=0}_Sx$};
	
	\end{tikzpicture}
	\caption{Cutter reflection}\label{fig:subdifferentialb}
	\end{subfigure}\hspace*{\fill}
	\begin{subfigure}{.3\textwidth}
		\begin{adjustbox}{trim=0cm 0.5cm 0cm 1.0cm,clip=true}
			\begin{tikzpicture}
			[scale=0.95]
			\node[anchor=south west,inner sep=0] (image) at (0,0) {\includegraphics[width=1\linewidth]{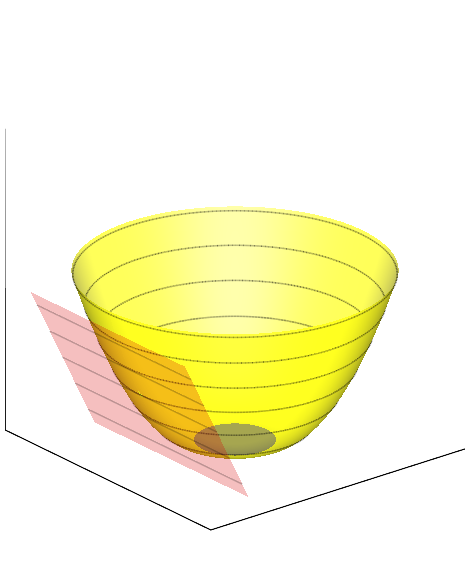}};
			\begin{scope}[x={(image.south east)},y={(image.north west)}]
			
			%DRAW x0
			\draw [fill,red] (.28,.195) circle [radius=.075cm];
			\node [above left,red] at (.28,.195) {$x$};
			
			%DRAW PAx0
			\draw [fill,red] (.3475,.2125) circle [radius=.075cm];
			\draw [red,thick,->] (.28,.215) -- (.28,.3575);
			\draw [red,<-,thick] (.337375,.23425) -- (.290125,.33575);

			\end{scope}
			\end{tikzpicture}
		\end{adjustbox}
		\caption{Subgradient projection}\label{fig:subdifferentialright}
	\end{subfigure}

	\caption{Subgradient projections are cutters.}\label{fig:subdifferential}
\end{figure}

In the case where projections onto the sets cannot be computed (or computing them exactly is undesirable), it makes sense to consider operators of the form \eqref{DRsequence} where the projections are replaced with subgradient projections or other kinds of cutters.

We will refer to all such discussed methods and their combination as \emph{cutter methods} and use the notation 
$$
\mathcal{T}_{A^\gamma,B^\mu}^\lambda := \lambda(\mathcal{R}_B^\mu \circ \mathcal{R}_A^\gamma)+(1-\lambda)\Id,
$$
where 
$$
\mathcal{R}_A^\gamma := (2-\gamma)(\mathcal{P}_A-\Id)+\Id, \quad \mathcal{R}_B^\mu := (2-\mu)(\mathcal{P}_B-\Id)+\Id
$$ 
are the relaxed versions of the cutters $\mathcal{P}_A$ and $\mathcal{P}_B$, which may be projections onto the constraint sets or more general cutters, depending on the context. 

In the case of subgradient projections we will slightly abuse the notation and let 
$$
\mathcal{T}_{f^\gamma,g^\mu}^\lambda : = \mathcal{T}_{(\lev_{\leq 0}f)^\gamma,(\lev_{\leq 0}g)^\mu}^\lambda,
$$
with cutters implemented via the subgradient projections~\eqref{eq:subgr}.

Notice that if for some closed convex set $S$ we let $f:=\dm(\cdot,S)$ be the distance function for the set $S$ given by
$$
\dm(x,S) = \min_{y\in S}\|x-y\|,
$$
then $P_S$ and $P_f$ coincide. We will mainly focus on averaged cutter relaxations $\mathcal{T}_{A^\gamma,B^\mu}^\lambda$, for which an example is shown in Figure~\ref{fig:averaged_cutter_relaxation}, and will elaborate on the functional implementation in Section~\ref{sec:imp}.

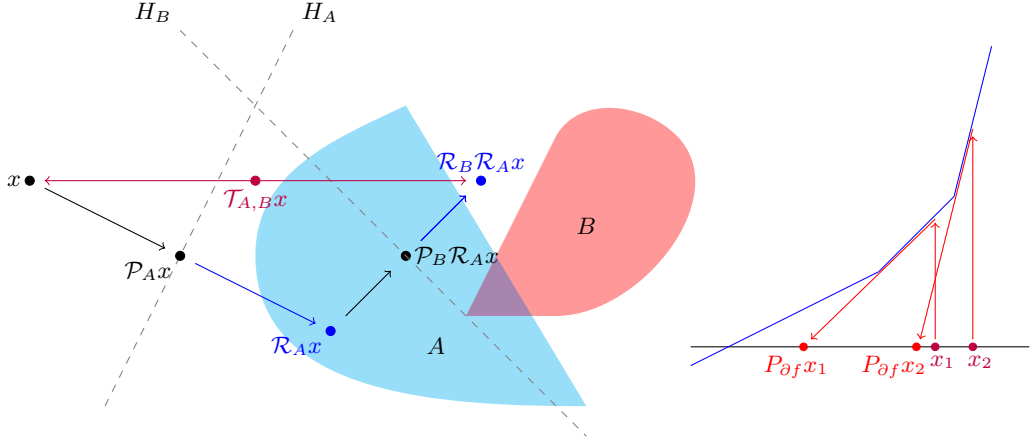
\begin{figure}[ht]
	\begin{center}
	\begin{subfigure}{.55\textwidth}
	\begin{tikzpicture}[scale=4.0]
	%PLOT SMOOTHLY
	%\draw plot [smooth] coordinates {(1.5,0) (.5,.5) (1,1)};
	%SHADE THE SET A
	\fill[cyan,fill opacity=0.4] (1,1) to (1.6,0)
	to [out=180,in=270] (.5,.5)
	to [out=90,in=205] cycle;
	\node at (1.1,.2) {$A$};
	
	%SHADE THE SET B INTERSECT A
	\fill[purple,fill opacity=0.4] (1.42,.3) to (1.2,.3) to (1.3,.5) to cycle;
	
	%SHADE THE SET B
	\fill[red,fill opacity=0.4] (1.42,.3) to (1.3,.5) to (1.5,.9) to [out=60,in=135] (1.9,.9) to [out=315,in=0] (1.5,.3) to cycle;
	\node [] at (1.6,.6) {$B$}; 					
	
	%OUTLINE THE SET
	%\draw[black] (1,1) to (1.5,0)
	%to [out=180,in=270] (.5,.5)
	%to [out=90,in=205] cycle;
	
	%DRAW H_A which has slope 2
	\draw [gray, dashed] (.625,1.25) -- (0,0);
	\node [above right] at (.625,1.25) {$H_A$};
	
	%DRAW x, PSx, and RSx
	\draw [fill,black] (-.25,.75) circle [radius=0.015];	
	\node [left] at (-.25,.75) {$x$};
	\draw [black,->] (-.2,.725) -- (.2,.525);
	\draw [fill,black] (.25,.5) circle [radius=0.015];
	\node [left] at (.25,.45) {$\PP_Ax$};
	\draw [blue,->] (.3,.475) -- (.7,.275);
	\draw [fill,blue] (.75,.25) circle [radius=0.015];	
	\node [left, blue] at (.75,.2) {$\mathcal{R}_Ax$};

	%DRAW RBx
	\draw [blue,->] (1.05,.55) -- (1.2,.7);
	\draw [fill,blue] (1.25,.75) circle [radius=0.015];
	\node [above,blue] at (1.25,.75) {$\mathcal{R}_B \mathcal{R}_A x$};
	\draw [black,->] (.8,.3) -- (.95,.45);
	\draw [fill,black] (1,.5) circle [radius=0.015];
	\draw [blue,->] (1.05,.55) -- (1.2,.7);
	\node [right] at (1,.5) {$\mathcal{P}_B \mathcal{R}_A x$};
	
	%DRAW H_B which has slope 1
	\draw [gray, dashed] (.25,1.25) -- (1.6,-.1);
	\node [above left] at (.25,1.25) {$H_B$};
	
	%DRAW my averaging step
	\draw [purple, <->] (-.2,.75) -- (1.2,.75);
	\draw [fill,purple] (.5,.75) circle [radius=0.015];
	\node [below,purple] at (.5,.75) {$\mathcal{T}_{A,B}x$};
	
	\end{tikzpicture}
	\end{subfigure}
	\begin{subfigure}{.4\textwidth}
		\begin{tikzpicture}[scale=1]
		\draw [black] (-.5,0) -- (4,0);
		\draw [blue] (-.5,-.25) -- (2,1);
		\draw [blue] (2,1) -- (3,2);
		\draw [blue] (3,2) -- (3.5,4);
		
		%DRAW X2
		\draw [fill,purple] (3.25,0) circle [radius=0.05];
		\node [below,purple] at (3.35,0) {$x_2$};
		\draw [red,->] (3.25,.1) -- (3.25,2.8);
		\draw [red,->] (3.25,2.9) -- (2.55,.1);					
		\draw [fill,red] (2.5,0) circle [radius=0.05];
		\node [below, red] at (2.2,0) {$P_{\partial f} x_2$};			
		
		%DRAW X1 behavior
		\draw [fill,purple] (2.75,0) circle [radius=0.05];
		\node [below,purple] at (2.85,0) {$x_1$};				
		\draw [red,->] (2.75,.1) -- (2.75,1.65);
		\draw [red,->] (2.75,1.7) -- (1.1,.1);			
		\draw [fill,red] (1,0) circle [radius=0.05];
		\node [below,red] at (.9,0) {$P_{\partial f} x_1$};							
		\end{tikzpicture}
		%\caption{$T$ may fail to be nonexpansive.}
	\end{subfigure}

	\end{center}
	\caption{An averaged relaxed cutter $\mathcal{T}_{A^\gamma,B^\mu}^\lambda$ may not be nonexpansive.}\label{fig:averaged_cutter_relaxation}
\end{figure}

Let $\A = N_A$ and $\B = N_B$ be the normal cone operators for closed convex sets $A$ and $B$. Then the resolvents $J_\A^\lambda,J_\B^\lambda$ (defined as $J_F^\lambda = (\Id+\lambda F)^{-1}$ for some set-valued mapping $F$) are the projection operators $P_A,P_B$ respectively, $T_{\A,\B} = \frac{1}{2}R_B^{\gamma=0} R_A^{\gamma=0} + \frac{1}{2}\Id$ is what we recognize as the Douglas--Rachford method, and $J_\A^\lambda v = \proj_A v \in A \cap B$ is a solution for the feasibility problem.

We quote the following key result from \cite{LM} that applies to a more general setting of maximal monotone operators. 
\begin{theorem}[Lions \& Mercier]\label{thm:LionsandMercier} Assume that 
	 $\A,\B$ are maximal monotone operators and $\A + \B$ is maximal monotone. Then for
	\begin{equation}
	T_{\A,\B}:\HH \rightarrow \HH \text{ by } x\mapsto J_\B^\lambda(2J_\A^\lambda-\Id)x+(\Id-J_\A^\lambda)x
	\end{equation}
	the sequence given by $x_{n+1}=T_{\A,\B}x_n$ converges weakly to some $v \in \HH$ as $n\rightarrow \infty$ such that $J_\A^\lambda v$ is a zero of $\A + \B$.
\end{theorem}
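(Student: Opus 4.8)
The plan is to recognize $T_{\A,\B}$ as the classical Douglas--Rachford operator, to exhibit it as an averaged map, to identify its fixed points with the zeros of $\A + \B$ through the intermediate point $J_\A^\lambda v$, and then to invoke the standard weak convergence theory for averaged nonexpansive operators with nonempty fixed point set. Setting $R_\A := 2J_\A^\lambda - \Id$ and $R_\B := 2J_\B^\lambda - \Id$, a direct expansion gives
$$
T_{\A,\B} = \tfrac{1}{2}\Id + \tfrac{1}{2}R_\B R_\A.
$$
Since $\A$ and $\B$ are maximal monotone, the resolvents $J_\A^\lambda$ and $J_\B^\lambda$ are firmly nonexpansive, so $R_\A$ and $R_\B$ are nonexpansive and hence so is their composition. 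It follows that $T_{\A,\B}$ is firmly nonexpansive (equivalently $\tfrac{1}{2}$-averaged), and in particular asymptotically regular: $\N{x_n - T_{\A,\B}x_n} \to 0$ for every starting point.

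Next I would characterize the fixed points. From $v = T_{\A,\B}v$ one reads off $J_\A^\lambda v = J_\B^\lambda R_\A v$. Writing $u := J_\A^\lambda v$, the definition of the resolvents yields $v - u \in \lambda \A u$ and $u - v \in \lambda \B u$; adding these inclusions gives $0 \in \A u + \B u$, so $u = J_\A^\lambda v$ is a zero of $\A + \B$. Reversing each implication shows the converse, so $v \in \Fix T_{\A,\B}$ precisely when $J_\A^\lambda v$ is a zero of $\A + \B$. In particular, the existence of such a zero --- guaranteed in the feasibility setting by $A \cap B \neq \emptyset$ --- ensures $\Fix T_{\A,\B} \neq \emptyset$.

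With both ingredients in hand I would feed them into the convergence machinery for averaged operators. Nonexpansivity makes $\{x_n\}$ Fej\'er monotone with respect to $\Fix T_{\A,\B}$, hence bounded; asymptotic regularity together with the demiclosedness of $\Id - T_{\A,\B}$ at $0$ forces every weak cluster point of $\{x_n\}$ to lie in $\Fix T_{\A,\B}$; and Opial's lemma promotes this to weak convergence of the entire sequence to a single $v \in \Fix T_{\A,\B}$. Applying the fixed point characterization to this limit then delivers the conclusion that $J_\A^\lambda v$ is a zero of $\A + \B$.

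The main obstacle is this last step in an infinite-dimensional Hilbert space: Fej\'er monotonicity by itself yields only boundedness and weak subsequential limits, so one genuinely needs the demiclosedness principle to certify that such limits are fixed points, and Opial's lemma to exclude multiple weak cluster points. In finite dimensions the argument collapses to elementary compactness and the uniqueness of the limit is immediate.
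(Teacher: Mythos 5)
Your proposal is correct, but there is no proof in the paper to compare it against: the paper states this result as a quoted theorem from Lions and Mercier \cite{LM} and supplies no argument of its own, deferring to the cited literature (it only remarks afterwards that the convergence rests on the firm nonexpansivity of $T_{\A,\B}$, i.e.\ that $R_\B^{\gamma=0}R_\A^{\gamma=0}$ is nonexpansive and $T_{\A,\B}$ is $1/2$-averaged). Your argument is the standard modern proof of exactly that kind: the identity $T_{\A,\B}=\tfrac{1}{2}\Id+\tfrac{1}{2}R_\B R_\A$ checks out by direct expansion; resolvents of maximal monotone operators are firmly nonexpansive, so the reflections are nonexpansive and $T_{\A,\B}$ is firmly nonexpansive; and your fixed-point computation ($u:=J_\A^\lambda v$ gives $v-u\in\lambda\A u$ and $u-v\in\lambda\B u$, hence $0\in(\A+\B)u$) is exactly right. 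One small point: the converse is not a literal reversal --- given $u\in\zer(\A+\B)$ you must \emph{choose} $w\in\A u\cap(-\B u)$ and construct $v=u+\lambda w$, then verify $T_{\A,\B}v=v$. The Krasnoselskii--Mann/demiclosedness/Opial machinery you invoke is the correct way to conclude weak convergence of the whole sequence, and you rightly identify demiclosedness of $\Id-T_{\A,\B}$ at $0$ as the step that has no finite-dimensional shortcut.

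Two caveats deserve to be explicit. First, asymptotic regularity of the averaged iteration, and indeed the entire convergence argument, require $\Fix T_{\A,\B}\neq\emptyset$, equivalently $\zer(\A+\B)\neq\emptyset$; maximal monotonicity of $\A+\B$ alone does not guarantee this (take $\A=0$ and $\B$ a nonzero constant map on $\RR$: then $\A+\B$ is maximal monotone with empty zero set and the iterates diverge), so your assertion that $\N{x_n-T_{\A,\B}x_n}\to 0$ ``for every starting point'' is only valid once nonemptiness of the fixed point set is secured. You patch this via $A\cap B\neq\emptyset$, which matches how the paper deploys the theorem for feasibility, but in the general operator setting the existence of a zero must be read as an implicit hypothesis of the statement. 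Second, note that your proof never uses maximality of $\A+\B$ beyond the existence of a zero --- the fixed-point characterization needs only maximality of $\A$ and $\B$ separately. This is not a defect: it is precisely the observation underlying the relaxations of that hypothesis by Bauschke--Combettes--Luke \cite{BCL} and Svaiter \cite{Svaiter} which the paper mentions immediately after quoting the theorem.
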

Bauschke, Combettes, and Luke \cite{BCL} showed that in the case of the feasibility problem \eqref{eq:feas} the requirement $\A + \B$ maximal monotone may be relaxed, a relaxation later made more general by Svaiter \cite{Svaiter}. See also \cite[Theorem 26.11]{BCL}. Both results rely on the firm nonexpansivity of $T_{\A,\B}$, an immediate consequence of the fact that $R_\B^{\gamma=0} R_\A^{\gamma=0}$ is \emph{nonexpansive} and so $T_{\A,\B}$ is $1/2$-averaged. We define this term and several others which we summarise in the following definition (see \cite[Def 4.1]{BC}, \cite[Def 2.2]{CRZ}, and \cite[Def 2.1.19]{Cegielski} for more details).

\begin{definition}[Properties of operators]\label{def:SQNE}
	Let $D\subset \HH$ be nonempty and let $T:D\rightarrow \HH$. Assume that $\Fix T := \{x \in \HH\, |\, Tx = x\} \ne \emptyset$. Then $T$ is

\noindent\textbf{firmly nonexpansive} if
		\begin{equation*}
		\|T(x)-T(y)\|^2 + \|(\Id-T)(x)-(\Id-T)(y)\|^2 \leq \|x-y\|^2 \qquad \forall x \in D, \quad \forall y \in D;
		\end{equation*}

\noindent\textbf{nonexpansive} if it is Lipschitz continuous with constant 1,
		\begin{equation*}
		\|T(x)-T(y)\| \leq \|x-y\| \qquad \forall x \in D, \quad \forall y \in D;
		\end{equation*}

\noindent\textbf{quasinonexpansive} if $ \qquad \|T(x)-y\| \leq \|x-y\| \qquad \forall x \in D, \quad \forall y \in \Fix T $

\noindent (an operator that is both quasinonexpansive and continuous is called paracontracting);

\noindent\textbf{strictly quasinonexpansive}  if
		\begin{equation*}
		\|T(x)-y\| < \|x-y\| \qquad \forall x \in D \setminus \Fix T,\quad \forall y \in \Fix T;
		\end{equation*}
\noindent\textbf{$\rho$-strongly quasinonexpansive}  for $\rho > 0$ if $$
		 \|Tx-y\|^2 \leq \|x-y\|^2 - \rho \|Tx -x\|^2 \qquad \forall x \in D \setminus \Fix T,\quad \forall y \in \Fix T.
		$$

\end{definition}

We are focussed on the feasible setting, so we can safely assume that for all operators $T$ considered in the paper $\Fix T \ne \emptyset$. As soon as one moves from the setting of projections into the setting of more general cutters, the (firmly) nonexpansive property of $\mathcal{T}_{A^\gamma,B^\mu}^\lambda$ may be lost, as illustrated in the following simple example.

\begin{example}[Loss of nonexpansivity when using cutters]\label{eg:1}Define $f:\RR \rightarrow \RR$ by
	\begin{equation}\label{eq:eg1f}
	f: x\mapsto \begin{cases}
	|x| & x\leq 1,\\
	2x-1 & \text{otherwise}.
	\end{cases}
	\end{equation}
	Then the subgradient cutter $P_{\partial f}:\RR\to \RR$ for the level set $\lev_{\leq 0} f$ is 
	\begin{equation}\label{eq:eg1P}
	P_{\partial f}: x\mapsto \begin{cases}
	0 & x <1,\\
	\frac{1}{2} & x>1,\\
	\text{some } u \in [0,1/2]& x=1.
	\end{cases}
	\end{equation}
	Observe that $P_{\partial f}$ is not nonexpansive for any choice of $x \in (0,1), y \in (1,2)$ satisfying $|x-y|<\frac{1}{2}$. A similar polyhedral example is shown at right in Figure~\ref{fig:averaged_cutter_relaxation}.%This is illustrated in Figure~\ref{fig:eg1}.
	%\begin{figure}
	%{\centering \includegraphics[scale=1]{}\\}
	%\caption{Loss of nonexpansivity (Example~\ref{eg:1})}
	%\label{fig:eg1}
	%\end{figure}
	%See the image at right in Figure~\ref{fig:averaged_cutter_relaxation} for a similar example.
\end{example}

Strong quasinonexpansivity is a less restrictive property that yields the desired convergence, though under a slightly more restrictive parameter scheme.
\begin{definition}[Fej\'er monotonicity]A sequence $(x_n)_{n\in \NN}$ is Fej\'er monotone with respect to closed convex set $C$ if
	$$
	\|x_{n+1}-x\|\leq \|x_n - x\|\qquad \forall x\in C, \quad \forall n \in \NN.
	$$
\end{definition}
A Fej\'er monotone sequence with respect to a closed convex set $C$ may be thought of as a sequence defined by $x_n := T^n x_0$ where $T$ is QNE with respect to $C=\Fix T$. Note that a Fej\'er monotone sequence with respect to a non-empty set is always bounded.

We have the following well-known convergence result (see \cite[Theorem 5.11]{BC}).

 \begin{theorem}\label{baucom}
 Let $(x_n)_{n\in\mathbb{N}}$ be a sequence in $\HH$ and let $C$ be a nonempty closed convex subset of $\HH$. Suppose that $(x_n)_{n\in\mathbb{N}}$ is Fej\'er monotone with respect to $C$. Then the following are equivalent: 
 \begin{enumerate}
 \item the sequence $(x_n)_{n\in\mathbb{N}}$ converges strongly (i.e. in norm) to a point in $C$;
 \item $(x_n)_{n\in\mathbb{N}}$ possesses a strong sequential cluster point in $C$;
 \item $\liminf\limits_{n\to \infty} \dm(x_n,C)=0.$
 \end{enumerate}
 \end{theorem}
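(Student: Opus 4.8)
The plan is to establish the equivalence by proving the cycle of implications (i) $\Rightarrow$ (ii) $\Rightarrow$ (iii) $\Rightarrow$ (i), with the first two implications essentially immediate and the third carrying all of the content. For (i) $\Rightarrow$ (ii), if $(x_n)_{n\in\NN}$ converges strongly to some $x^\ast\in C$, then $x^\ast$ is itself a strong sequential cluster point lying in $C$. For (ii) $\Rightarrow$ (iii), if $c\in C$ is the strong limit of a subsequence $(x_{n_k})$, then $\dm(x_{n_k},C)\leq \|x_{n_k}-c\|\to 0$, so that $\liminf_{n}\dm(x_n,C)=0$.

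The substantive direction is (iii) $\Rightarrow$ (i), and the first step is to record two consequences of Fej\'er monotonicity. Fixing any $x\in C$, the scalar sequence $(\|x_n-x\|)_n$ is nonincreasing, hence convergent and bounded; in particular $(x_n)_{n\in\NN}$ is bounded. More importantly, the distances $(\dm(x_n,C))_n$ are themselves nonincreasing: writing $P_C$ for the (single-valued) projection onto the closed convex nonempty set $C$ and applying Fej\'er monotonicity at the point $P_C x_n\in C$, one obtains
\[
\dm(x_{n+1},C)\leq \|x_{n+1}-P_C x_n\|\leq \|x_n-P_C x_n\|=\dm(x_n,C).
\]
Combining this monotonicity with the hypothesis $\liminf_n \dm(x_n,C)=0$ upgrades the $\liminf$ to a genuine limit, so that $\dm(x_n,C)\to 0$.

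The heart of the argument---and the step I expect to be the main obstacle---is to promote this control of the distance into strong convergence of the sequence itself, for which I would show that the projections $p_n:=P_C x_n$ form a Cauchy sequence. The key estimate again exploits Fej\'er monotonicity in its two-index form: for $m\geq n$, since $p_n\in C$, the scalar sequence $(\|x_k-p_n\|)_{k\geq n}$ is nonincreasing, whence $\|x_m-p_n\|\leq \|x_n-p_n\|=\dm(x_n,C)$. A triangle inequality then gives
\[
\|p_m-p_n\|\leq \|p_m-x_m\|+\|x_m-p_n\|\leq \dm(x_m,C)+\dm(x_n,C),
\]
and since both distances tend to $0$, the sequence $(p_n)_{n\in\NN}$ is Cauchy. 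By completeness of $\HH$ and closedness of $C$ it converges to some $p^\ast\in C$.

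Finally, I would conclude via the estimate $\|x_n-p^\ast\|\leq \|x_n-p_n\|+\|p_n-p^\ast\|=\dm(x_n,C)+\|p_n-p^\ast\|\to 0$, so that $(x_n)_{n\in\NN}$ converges strongly to $p^\ast\in C$. This establishes (iii) $\Rightarrow$ (i) and closes the cycle. The only genuinely delicate point is the Cauchy estimate for $(p_n)$, where one must resist the temptation to compare $p_m$ and $p_n$ directly and instead route the comparison through $x_m$, using that Fej\'er monotonicity controls $\|x_m-p_n\|$ by $\dm(x_n,C)$ precisely because $p_n$ was chosen in $C$.
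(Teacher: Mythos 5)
Your proof is correct. Note that the paper itself gives no proof of this statement: it is quoted as a known result with a citation to \cite[Theorem 5.11]{BC}, so there is no in-paper argument to compare against; measured against the standard textbook proof, yours is essentially the same. All three implications are sound: (i)$\Rightarrow$(ii) and (ii)$\Rightarrow$(iii) are immediate as you say, the monotonicity of $(\dm(x_n,C))_n$ via the test point $P_C x_n \in C$ correctly upgrades the $\liminf$ to a limit, and your two-index estimate $\|x_m - p_n\| \le \|x_n - p_n\| = \dm(x_n,C)$ for $m \ge n$ (valid because Fej\'er monotonicity holds at \emph{every} point of $C$, in particular at $p_n$) is exactly the crux. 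The one stylistic simplification available: rather than proving the projections $p_n := P_C x_n$ are Cauchy and then pulling $x_n$ along, you can show directly that $(x_n)_{n\in\NN}$ itself is Cauchy, since for $m \ge n$ the same estimate gives $\|x_m - x_n\| \le \|x_m - P_C x_n\| + \|P_C x_n - x_n\| \le 2\,\dm(x_n,C) \to 0$; the strong limit $x^\ast$ then lies in $C$ because $\dm(\cdot,C)$ is continuous, $\dm(x_n,C)\to 0$, and $C$ is closed. This avoids the detour through $(p_n)$, but both routes are valid and rest on the identical Fej\'er mechanism.
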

	
\section{Convergence of Projection Methods}\label{sec:main}

In the following theorem, \ref{lem_2} is a known consequence of \cite[Corollary 3.7.1(i)]{Cegielski}. However, we provide a new proof which relies on simple geometry. We will then go on to analyse convergence for $\mathcal{T}_{A^\gamma,B^\mu}^\lambda$, and the details of our proof will illustrate why for averaged cutter relaxation methods we may lose convergence in the case of $\gamma=0$.

\begin{theorem}\label{thm:sqne} Let $A$ be a closed convex set in a Hilbert space $\HH$, and let $\mathcal{P}_A$ be a cutter. Then the following hold:
	\begin{enumerate}[label={(\roman*)}]
		\item $ \|\mathcal{R}_{A}^\gamma(x)-y\|^2 \leq \gamma (\gamma-2) \|x-\mathcal{P}_A(x)\|^2+\|x-y\|^2 \qquad \forall y \in A\quad  \forall x \in \HH$; \label{lem_2}
		\item $\mathcal{R}_A^\gamma$ is $\gamma/(2-\gamma)$-strongly quasinonexpansive;\label{lem_SQNE}
		\item $\mathcal{R}_{A}^\gamma$ is strictly quasinonexpansive for $\gamma \in (0,2)$ if $(\forall x \notin A)\; \mathcal{P}_A(x)\neq x$. \label{lem_4}
	\end{enumerate}
\end{theorem}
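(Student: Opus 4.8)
The plan is to establish \ref{lem_2} by a single direct expansion of the squared norm, and then obtain \ref{lem_SQNE} and \ref{lem_4} as essentially algebraic corollaries of it. Throughout I write $p := \mathcal{P}_A(x)$ and $d := x - \mathcal{P}_A(x)$, and I record the two identities that follow from the definition of the relaxed cutter (which moves $x$ along the segment through $p$):
$$
\mathcal{R}_A^\gamma(x) - x = (2-\gamma)(p-x) = -(2-\gamma)\,d, \qquad \mathcal{R}_A^\gamma(x) - y = (x-y) - (2-\gamma)\,d .
$$
Since $\gamma \in [0,2)$ we have $2-\gamma > 0$, and $\Fix \mathcal{R}_A^\gamma = \Fix \mathcal{P}_A = A$ (because $2-\gamma \neq 0$ forces $\mathcal{R}_A^\gamma x = x \iff \mathcal{P}_A x = x$), so the cutter inequality \eqref{eq:fixed-cutter} applies with $z = y$ for every $y \in A$.

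For \ref{lem_2} I would expand, using the second identity above,
$$
\|\mathcal{R}_A^\gamma(x) - y\|^2 = \|x-y\|^2 - 2(2-\gamma)\langle x-y,\,d\rangle + (2-\gamma)^2\|d\|^2 .
$$
The one nontrivial ingredient --- and the only place the cutter hypothesis is used --- is a lower bound on the cross term. Decomposing $x - y = d + (p - y)$ gives $\langle x-y,\,d\rangle = \|d\|^2 - \langle x-p,\,y-p\rangle$, and the cutter inequality \eqref{eq:fixed-cutter} (applied at $x$, with $\mathcal{P}_A x = p$ and $z = y \in A$) says precisely that $\langle x-p,\,y-p\rangle \le 0$, whence $\langle x-y,\,d\rangle \ge \|d\|^2$. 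Because $2-\gamma > 0$, substituting this bound and simplifying the coefficient via $(2-\gamma)^2 - 2(2-\gamma) = \gamma(\gamma-2)$ yields \ref{lem_2}. I expect this cross-term estimate to be the only real obstacle; everything else is bookkeeping, and the subtlety to watch is simply keeping track of the sign of $2-\gamma$. Geometrically the estimate expresses that $p$ is the foot of the perpendicular from $x$ onto a hyperplane separating $x$ from $A$, so that $y$ sits on the far side and $d$ makes a nonobtuse angle with $x-y$.

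Part \ref{lem_SQNE} then follows by rereading \ref{lem_2}. From $\mathcal{R}_A^\gamma(x) - x = -(2-\gamma)d$ we get $\|\mathcal{R}_A^\gamma(x)-x\|^2 = (2-\gamma)^2\|d\|^2$, so that $\gamma(2-\gamma)\|d\|^2 = \tfrac{\gamma}{2-\gamma}\|\mathcal{R}_A^\gamma(x)-x\|^2$. Since $\gamma(\gamma-2)\|d\|^2 = -\gamma(2-\gamma)\|d\|^2$, inequality \ref{lem_2} becomes $\|\mathcal{R}_A^\gamma(x)-y\|^2 \le \|x-y\|^2 - \tfrac{\gamma}{2-\gamma}\|\mathcal{R}_A^\gamma(x)-x\|^2$, which is exactly $\tfrac{\gamma}{2-\gamma}$-strong quasinonexpansivity.

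Finally, for \ref{lem_4} I would observe that when $\gamma \in (0,2)$ the factor $\gamma(2-\gamma)$ is strictly positive, and that for $x \notin A = \Fix \mathcal{R}_A^\gamma$ the hypothesis $\mathcal{P}_A(x) \neq x$ forces $\|d\|^2 > 0$. Hence the subtracted term in \ref{lem_2} is strictly positive, so $\|\mathcal{R}_A^\gamma(x)-y\|^2 < \|x-y\|^2$, and taking square roots gives strict quasinonexpansivity. Thus all three items reduce to the single computation in \ref{lem_2}, whose heart is the cutter inequality.
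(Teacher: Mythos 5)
Your proof is correct, but it takes a genuinely different route from the paper's. You prove \ref{lem_2} by the direct algebraic expansion $\|\mathcal{R}_A^\gamma(x)-y\|^2=\|x-y\|^2-2(2-\gamma)\langle x-y,\,x-\mathcal{P}_A(x)\rangle+(2-\gamma)^2\|x-\mathcal{P}_A(x)\|^2$, bounding the cross term via the cutter inequality \eqref{eq:fixed-cutter} and simplifying with $(2-\gamma)^2-2(2-\gamma)=\gamma(\gamma-2)$; this is essentially the classical computation behind \cite[Corollary 3.7.1]{Cegielski}, which the paper itself cites as the known source of \ref{lem_2} before announcing that it will instead give ``a new proof which relies on simple geometry.'' The paper's proof normalizes $x=0$ after a case split on $x\in A$, decomposes $y=v+u$ orthogonally along ${\rm span}\{\mathcal{P}_A(x)\}$, shows $\beta\geq 1$ for $v=\beta\mathcal{P}_A(x)$, establishes the one-dimensional estimate \eqref{lemma_key_fact}, $\|\mathcal{R}_A^\gamma(x)-v\|\leq\|v\|-\min\{\gamma,2-\gamma\}\|\mathcal{P}_A(x)\|$, via a max/min case analysis, and then reassembles \ref{lem_2} with two applications of the Pythagorean theorem and the identity $\psi(\gamma)(\psi(\gamma)-2)=\gamma(\gamma-2)$. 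What your version buys: it is shorter, needs no translation step, no case split (since \eqref{eq:fixed-cutter} holds for every $x\in\HH$, the case $x\in A$ is not special), and no orthogonal-decomposition bookkeeping. What the paper's version buys: the intermediate inequality isolates the geometric margin $\min\{\gamma,2-\gamma\}$, which visibly collapses at the reflection limits $\gamma\in\{0,2\}$; this picture (Figure~\ref{fig:triangle}) is precisely what the authors later invoke to explain why convergence can fail at $\gamma=0$ (Examples~\ref{eg:fixedpoints1} and~\ref{eg:fixedpoints2}), so the geometric detour is deliberate exposition rather than inefficiency. Your derivations of \ref{lem_SQNE} and \ref{lem_4} from \ref{lem_2} match the paper's in substance. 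One point you use implicitly (as does the paper): applying \eqref{eq:fixed-cutter} with $z=y\in A$ requires $A\subseteq\Fix\mathcal{P}_A$, since the theorem's hypothesis only says ``$\mathcal{P}_A$ is a cutter''; your parenthetical identification $\Fix\mathcal{R}_A^\gamma=\Fix\mathcal{P}_A=A$ (using $2-\gamma\neq 0$, and, for the inclusion $\Fix\mathcal{P}_A\subseteq A$, the hypothesis of \ref{lem_4}) makes this explicit, which is if anything slightly more careful than the original.
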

\begin{proof}
	 If $x \in A$, then $\mathcal{R}_A^\gamma(x)=\PP_A(x)=x$ and the proof of \ref{lem_2} is trivial. Consider the case when  $x \notin A$. 	Without loss of generality we can assume that $x=0$. Indeed, it is evident that for the affine change of variable $u' = u-x$ the induced mapping $\mathcal{P}_{A'}(u') = \PP_{A-x}(u-x)$ is again a cutter for $A' = A-x$, and the relation (i) can be restated in terms of $A'$ and $\PP_{A'}$; this is also clear from the geometry illustrated in Figure~\ref{fig:triangle}.

Fix $y \in A$. We have $y := v+u$ where $v\in {\rm span}\{\PP_A(x)\}, u \in {\rm span}\{\PP_A(x)\}^\perp$. We will first show that
\begin{equation}\label{lemma_key_fact}
\|\mathcal{R}_A^\gamma(x)-v\| \leq \|v\| - \min\{\gamma,2-\gamma\} \|\PP_A(x)\|.
\end{equation}
Here Figure~\ref{fig:triangle} is most instructive, both for understanding this inequality and motivating its proof.

Since $\PP_A$ is a cutter, we have 
$$
\langle y,\PP_A(x)\rangle \geq \|\PP_A(x)\|^2 \quad \forall \, y \in A.
$$	 
Furthermore, we have $v = \beta \PP_A(x)$, hence
$$
\beta \|\PP_A(x)\|^2= \langle v, \PP_A(x)\rangle = \langle y-u,\PP_A(x)\rangle  = \langle y,\PP_A(x)\rangle \geq \|\PP_A(x)\|^2,
$$
which yields $\beta\geq 1$ (observe that $\|P_A(x)\|^2>0$ since $x=0\notin A$). Now
\begin{equation}\label{lemma_before_cases}
\|\mathcal{R}_A^\gamma(x)-v\| = \|(2-\gamma)\PP_A(x)-\beta \PP_A(x)\| = |2-\gamma-\beta| \| \PP_A(x)\|.
\end{equation}

Observe that 
\begin{align*}
|2-\gamma - \beta| 
& = \max\{2-\gamma-\beta, \beta+\gamma -2\}  \\
&  = \beta + \max\{2\underbrace{(1-\beta)}_{\leq 0}-\gamma, \gamma-2\} \\
& \leq \beta+\max\{-\gamma, \gamma-2\} \\
& = \beta-\min\{\gamma, 2-\gamma\},
\end{align*}
hence we have  \eqref{lemma_key_fact}.
For convenience, let
\begin{equation}
\psi: [0,2) \rightarrow [0,1] \text{ defined by } \gamma \mapsto \min\{\gamma, 2-\gamma\} = \begin{cases}
\gamma & \text{ if } \gamma \in [0,1],\\
2-\gamma & \text{ if } \gamma \in (1,2).
\end{cases}
\end{equation}
\begin{figure}[ht]
		\begin{subfigure}{0.4\textwidth}
			\begin{tikzpicture}[scale=2.8]
			
			%draw H
			\draw [thick,gray,dashed] (1,0) -- (1,-.85);
			\node [left] at (1,-.5) {$H$};
			
			%draw arrow from x to PAx
			\draw [thick,black,->] (.05,0) -- (.95,0);
			%draw x
			\draw [fill, black] (0,0) circle [radius=0.015];
			\node [left, black] at (0,0) {$x$};
			
			%draw arrow from PAx to RAx
			\draw [thick,blue,->] (1.05,0) -- (1.45,0);
			%draw PAx
			\draw [fill, black] (1,0) circle [radius=0.015];
			\node [above, black] at (1,0) {$\PP_A(x)$};
			
			%draw RAx
			\draw [fill, black] (1.5,0) circle [radius=0.015];
			\node [above, black] at (1.5,0) {$\mathcal{R}_A^\gamma(x)$};
			
			%draw arrow between RAx and v
			\draw [thick,red,<->] (1.55,0) -- (1.95,0);
			
			%label v
			\draw [fill,red] (2,0) circle [radius=0.015];
			\node [above,red] at (2,0) {$v$};
			
			%draw arrow between v and y
			\draw [thick,red,->] (2,-.05) -- (2,-.7);
			\node [left,red] at (2,-.3) {$u$};
			
			%draw v
			\draw [fill,red] (2,-.75) circle [radius=0.015];
			\node [below,red] at (2,-.75) {$y$};
			
			%draw arrow for f
			\draw [fill,red,<->] (.1,-0.0375) -- (1.9,-0.7125);
			
			%draw arrow for e
			\draw [fill,red,<->] (1.5375,-0.05625) -- (1.9625,-0.69375);
			\end{tikzpicture}
			\caption{Case 1}		
		\end{subfigure}\begin{subfigure}{0.325\textwidth}
		\begin{tikzpicture}[scale=2.8]
		
		%draw H
		\draw [thick,gray,dashed] (1,0) -- (1,-.85);
		\node [left] at (1,-.25) {$H$};
		
		%draw arrow from x to PAx
		\draw [thick,black,->] (.05,0) -- (.95,0);
		%draw x
		\draw [fill, black] (0,0) circle [radius=0.015];
		\node [left, black] at (0,0) {$x$};
		
		%draw arrow from PAx to RAx
		\draw [thick,blue,->] (1.05,-0.05) -- (1.45,-0.05);
		%draw PAx
		\draw [fill, black] (1,0) circle [radius=0.015];
		\node [above, black] at (1,0) {$\PP_A(x)$};
		
		%draw RAx
		\draw [fill, black] (1.5,0) circle [radius=0.015];
		\node [above, black] at (1.5,0) {$\mathcal{R}_A^\gamma(x)$};
		
		%draw arrow between RAx and v
		\draw [thick,red,<->] (1.45,0) -- (1.3,0);
		
		%label v
		\draw [fill,red] (1.25,0) circle [radius=0.015];
		\node [above,red] at (1.25,0) {$v$};
		
		%draw arrow between v and y
		\draw [thick,red,->] (1.25,-.05) -- (1.25,-.7);
		\node [left,red] at (1.25,-.3) {$u$};
		
		%draw v
		\draw [fill,red] (1.25,-.75) circle [radius=0.015];
		\node [below,red] at (1.25,-.75) {$y$};
		
		%draw arrow for f
		\draw [fill,red,<->] (.075,-0.045) -- (1.175,-0.705);
		
		%draw arrow for e
		\draw [fill,red,<->] (1.485,-0.05625) -- (1.315,-0.69375);
		\end{tikzpicture}
		\caption{Case 2}		
	\end{subfigure}\begin{subfigure}{0.275\textwidth}
	\begin{tikzpicture}[scale=2.8]
	
	%draw H
	\draw [thick,gray,dashed] (1,0) -- (1,-.75);
	\node [left] at (1,-.2) {$H$};
	
	%draw arrow from x to PAx
	\draw [thick,black,->] (.05,0) -- (.95,0);
	%draw x
	\draw [fill, black] (0,0) circle [radius=0.015];
	\node [left, black] at (0,0) {$x$};
	
	%draw arrow from PAx to RAx
	\draw [thick,blue,->] (.95,-.05) -- (.5,-.05);
	%draw PAx
	\draw [fill, black] (1,0) circle [radius=0.015];
	\node [above, black] at (1,0) {$\PP_A(x)$};
	
	%draw RAx
	\draw [fill, black] (.5,0) circle [radius=0.015];
	\node [above, black] at (.5,0) {$\mathcal{R}_A^\gamma(x)$};
	
	%draw arrow between PAx and v
	\draw [thick,red,<->] (1.05,0) -- (1.20,0);
	
	%label v
	\draw [fill,red] (1.25,0) circle [radius=0.015];
	\node [above,red] at (1.25,0) {$v$};
	
	%draw arrow between v and y
	\draw [thick,red,->] (1.25,-.05) -- (1.25,-.7);
	\node [left,red] at (1.25,-.3) {$u$};
	
	%draw v
	\draw [fill,red] (1.25,-.75) circle [radius=0.015];
	\node [below,red] at (1.25,-.75) {$y$};
	
	%draw arrow for f
	\draw [fill,red,<->] (.0625,-0.03875) -- (1.1875,-0.73625);
	
	%draw arrow for e
	\draw [fill,red,<->] (.575,-0.075) -- (1.175,-0.675);
	\end{tikzpicture}
	\caption{Case 3}		
\end{subfigure}

\caption{Illustrations of the inequality~\eqref{lemma_key_fact} in the proof of  Theorem~\ref{thm:sqne}}\label{fig:triangle}	
\end{figure}
%	\begin{itemize}
%		\item[Case 1] If $\gamma \in [0,1]$ and $\beta \geq 2-\gamma$ then $|2-\gamma-\beta| = \beta - (2-\gamma)$ and so
%		\begin{equation}\label{lemma_case2_1}
%		|2-\gamma-\beta|\|\PP_A(x)\|=\beta \|\PP_A(x)\|-(2-\gamma)\|\PP_A(x)\|=\|v\|-(2-\gamma)\|\PP_A(x)\|
%		\end{equation}
%		Now since $\gamma \in (0,1]$, $\gamma \leq 2-\gamma$ and so
%		\begin{equation}\label{lemma_case2_2}
%		\|v\|-(2-\gamma)\|\PP_A(x)\| \leq \|v\| - \gamma\|\PP_A(x)\|.
%		\end{equation}
%		Altogether, \eqref{lemma_before_cases},\eqref{lemma_case2_1}, and \eqref{lemma_case2_2} imply \eqref{lemma_key_fact}.
%		\item[Case 2] If $\gamma \in [0,1]$ and $\beta < 2-\gamma$ then $|2-\gamma-\beta| = 2-\gamma-\beta \leq 1-\gamma$, since $\beta \geq 1$. Since $\beta \geq 1$, we also have $\|v\| \geq \|\PP_A(x)\|$. Altogether,
%		\begin{equation}\label{lemma_case1_1}
%		|2-\gamma-\beta| \|\PP_A(x)\| \leq \|\PP_A(x)\| -\gamma \|\PP_A(x)\| \leq \|v\|-\gamma\|\PP_A(x)\|.
%		\end{equation}
%		Together  \eqref{lemma_before_cases} and \eqref{lemma_case1_1} imply \eqref{lemma_key_fact}.
%		\item[Case 3] If $\gamma \in [1,2]$ then $\beta \geq 1 \geq 2-\gamma \geq 0$ and so $|2-\gamma -\beta| = \beta - (2-\gamma)$. Thus \eqref{lemma_case2_1} holds. Together, \eqref{lemma_before_cases} and \eqref{lemma_case2_1} imply \eqref{lemma_key_fact}.
%	\end{itemize}
	Having shown that \eqref{lemma_key_fact} is true, the Pythagorean theorem yields
	\begin{equation}\label{lemma_pythag_1}
	\|\mathcal{R}_A^\gamma(x)-v\|^2 = \|y-\mathcal{R}_A^\gamma(x)\|^2 - \| u\|^2.
	\end{equation}
	Together \eqref{lemma_pythag_1} and \eqref{lemma_key_fact} yield
	\begin{align}
	\|y-\mathcal{R}_A^\gamma(x)\|^2 &\leq \left(\|v\|-\psi(\gamma)\|\PP_A(x)\|\right)^2 + \|u\|^2. \label{lemma_paper_3}
	\end{align}
	Now the Pythagorean theorem also yields
	\begin{equation}\label{lemma_pythag_2}
	\|u\|^2 = \|y\|^2-\|v\|^2.
	\end{equation}
	Equations \eqref{lemma_paper_3} and \eqref{lemma_pythag_2} together yield
	\begin{align}
	\|y-\mathcal{R}_A^\gamma(x)\|^2 &\leq \left(\|v\|-\psi(\gamma)\|\PP_A(x)\|\right)^2 + \|y\|^2-\|v\|^2 \nonumber \\
	&= -2\psi(\gamma)\|\PP_A(x)\|\cdot\|v\|+\psi(\gamma)^2\|\PP_A(x)\|^2+\|y\|^2. \label{lemma_paper_6}
	\end{align}
	Now since $\|\PP_A(x)\| \leq \|v\|$,
	\begin{equation}\label{lemma_paper_7}
	-2\psi(\gamma) \|\PP_A(x)\|^2 \geq -2\psi(\gamma) \|\PP_A(x)\|\cdot \|v\|.
	\end{equation}
	Now \eqref{lemma_paper_6} and \eqref{lemma_paper_7} together yield
	\begin{align}
	\|y-\mathcal{R}_A^\gamma(x)\|^2 &\leq -2\psi(\gamma) \|\PP_A(x)\|^2+\psi(\gamma)^2\|\PP_A(x)\|^2+\|y\|^2 \nonumber \\
	&= \psi(\gamma) (\psi(\gamma)-2)\|\PP_A(x)\|^2+\|y\|^2\nonumber \\
	&= \gamma(\gamma-2)\|\PP_A(x)\|^2+\|y\|^2, \label{lemma_paper_8}
	\end{align}
	where the final equality comes from the fact that $\psi(\gamma) (\psi(\gamma)-2)=\gamma(\gamma-2)$. This shows \ref{lem_2}. Now since $\gamma \in (0,2]$ we have that $\gamma(\gamma-2)\leq 0$. Combining with the fact that $\|\mathcal{R}_A^\gamma(x)\| = (2-\gamma)\|\PP_A(x)\|$, we have from \eqref{lemma_paper_8} that
	\begin{align*}
	\|y-\mathcal{R}_A^\gamma(x)\|^2 
	&\leq \gamma(\gamma-2)\left(\frac{\|\mathcal{R}_A^\gamma(x)\|}{2-\gamma} \right)^2+\|y\|^2 \\
	&=-\frac{\gamma}{2-\gamma}\|\mathcal{R}_A^\gamma(x)\|^2+\|y\|^2,
	\end{align*}
	which shows \ref{lem_SQNE}. 
	
	If we have $\gamma \in (0,2)$, $x\notin A$, and $(\forall \; x \notin A)\; \PP_A(x) \neq x$, then $\gamma(\gamma-2)\|\PP_A(x)\|^2 <0$ strictly and so
	\begin{equation*}
	\|y-\mathcal{R}_A^\gamma(x)\|^2 \leq \gamma(\gamma-2)\|\PP_A(x)\|^2+\|y\|^2 < \|y\|^2.
	\end{equation*}
	This shows \ref{lem_4}. 
\end{proof}

\begin{theorem}\label{thm:main}\label{maintheorem2}
The following hold: 
\begin{enumerate}[label={(\roman*)}]
	\item  $\mathcal{T}_{A^\gamma,B^\mu}^\lambda$ is quasinonexpansive; \label{maintheorem1}
	\item if $\mu,\gamma \in (0,2)$ then $\mathcal{T}_{A^\gamma,B^\mu}^\lambda$ is strictly quasinonexpansive and 
$$ 
\underset{n\rightarrow\infty}{\lim}\|x_n-\PP_A(x_n)\| = \underset{n\rightarrow\infty}{\lim}\|x_n-\PP_B \mathcal{R}_A^\gamma(x_n)\|=0.
$$
	
	%\item and both $\mathcal{T}_{A^\gamma,B^\mu}^\lambda$ and $\mathcal{R}_B^\mu \mathcal{R}_A^\gamma $ are strictly quasinonexpansive if $\gamma \in (0,2)$ and $(\forall\; x\notin A) P_A(x)\neq x$ or $ (\forall\; x\notin B) P_B(x)\neq x$.\label{lem:strictlyquasinon}
\end{enumerate}
\end{theorem}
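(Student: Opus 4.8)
The plan is to derive both parts from Theorem~\ref{thm:sqne}\ref{lem_2}, applied once to each set, and to absorb the outer averaging through the elementary identity $\|\lambda a+(1-\lambda)b\|^2=\lambda\|a\|^2+(1-\lambda)\|b\|^2-\lambda(1-\lambda)\|a-b\|^2$. Write $S:=\mathcal{R}_B^\mu\circ\mathcal{R}_A^\gamma$, fix a feasible point $y\in A\cap B\subseteq\Fix\mathcal{T}_{A^\gamma,B^\mu}^\lambda$, and let $x\in\HH$. First I would invoke Theorem~\ref{thm:sqne}\ref{lem_2} for $A$ at $x$ and then again for $B$ at the intermediate point $\mathcal{R}_A^\gamma(x)$ (legitimate because $y\in B$); chaining the two bounds gives the master inequality
\[
\|S(x)-y\|^2\le\|x-y\|^2+\gamma(\gamma-2)\|x-\PP_A(x)\|^2+\mu(\mu-2)\|\mathcal{R}_A^\gamma(x)-\PP_B\mathcal{R}_A^\gamma(x)\|^2.
\]

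Next I would apply the averaging identity with $a=S(x)-y$ and $b=x-y$, so that $a-b=S(x)-x$, obtaining $\|\mathcal{T}_{A^\gamma,B^\mu}^\lambda(x)-y\|^2=\lambda\|S(x)-y\|^2+(1-\lambda)\|x-y\|^2-\lambda(1-\lambda)\|S(x)-x\|^2$, and substitute the master inequality to get
\[
\|\mathcal{T}_{A^\gamma,B^\mu}^\lambda(x)-y\|^2\le\|x-y\|^2+\lambda\gamma(\gamma-2)\|x-\PP_A(x)\|^2+\lambda\mu(\mu-2)\|\mathcal{R}_A^\gamma(x)-\PP_B\mathcal{R}_A^\gamma(x)\|^2-\lambda(1-\lambda)\|S(x)-x\|^2.
\]
Since $\gamma,\mu\in[0,2)$ makes $\gamma(\gamma-2)$ and $\mu(\mu-2)$ nonpositive and $\lambda(1-\lambda)\ge0$, every correction term is $\le0$, so $\|\mathcal{T}_{A^\gamma,B^\mu}^\lambda(x)-y\|\le\|x-y\|$; this is quasinonexpansivity relative to the solution set $A\cap B$, proving \ref{maintheorem1}.

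For the strict quasinonexpansivity I would take $\gamma,\mu\in(0,2)$, so that $\gamma(\gamma-2)<0$ and $\mu(\mu-2)<0$ strictly, and suppose $x\notin\Fix\mathcal{T}_{A^\gamma,B^\mu}^\lambda=A\cap B$. If $\|x-\PP_A(x)\|>0$ the first correction term alone forces strict inequality; otherwise $x\in\Fix\PP_A=A$, hence $\mathcal{R}_A^\gamma(x)=x$ and the middle term becomes $\lambda\mu(\mu-2)\|x-\PP_B(x)\|^2<0$ because $x\notin B=\Fix\PP_B$. In either case $\|\mathcal{T}_{A^\gamma,B^\mu}^\lambda(x)-y\|<\|x-y\|$. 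This is the delicate step and the main obstacle: when $\lambda=1$ the averaging term $-\lambda(1-\lambda)\|S(x)-x\|^2$ vanishes, so strictness must be extracted entirely from the two cutter residuals, which is exactly where the identifications $\Fix\PP_A=A$ and $\Fix\PP_B=B$ (equivalently, the hypothesis of Theorem~\ref{thm:sqne}\ref{lem_4}) are indispensable.

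Finally, for the limits I would substitute $x=x_n$ and $\mathcal{T}_{A^\gamma,B^\mu}^\lambda(x_n)=x_{n+1}$ into the second displayed bound and rearrange so that the nonnegative quantities $-\lambda\gamma(\gamma-2)\|x_n-\PP_A(x_n)\|^2$ and $-\lambda\mu(\mu-2)\|\mathcal{R}_A^\gamma(x_n)-\PP_B\mathcal{R}_A^\gamma(x_n)\|^2$ are dominated by the telescoping difference $\|x_n-y\|^2-\|x_{n+1}-y\|^2$. Summing over $n=0,\dots,N$ collapses the right-hand side to at most $\|x_0-y\|^2$, so both nonnegative series converge and their general terms tend to $0$; hence $\|x_n-\PP_A(x_n)\|\to0$ and $\|\mathcal{R}_A^\gamma(x_n)-\PP_B\mathcal{R}_A^\gamma(x_n)\|\to0$. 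The former is the first asserted limit. For the second, I would combine $\|x_n-\mathcal{R}_A^\gamma(x_n)\|=(2-\gamma)\|x_n-\PP_A(x_n)\|\to0$ with the triangle inequality $\|x_n-\PP_B\mathcal{R}_A^\gamma(x_n)\|\le\|x_n-\mathcal{R}_A^\gamma(x_n)\|+\|\mathcal{R}_A^\gamma(x_n)-\PP_B\mathcal{R}_A^\gamma(x_n)\|$.
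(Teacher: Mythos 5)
Your proposal is correct and takes essentially the same route as the paper's proof: two applications of Theorem~\ref{thm:sqne}\ref{lem_2}, chained at $x$ and at $\mathcal{R}_A^\gamma(x)$, followed by absorbing the averaging step (the paper uses plain convexity of $\|\cdot-y\|^2$ where you use the exact identity, whose extra term $-\lambda(1-\lambda)\|S(x)-x\|^2$ is nonpositive and discarded anyway), and the identical telescoping-sum and triangle-inequality finish for the two limits. The only cosmetic difference is the strictness step, where the paper argues directly that $x\notin\Fix\mathcal{T}_{A^\gamma,B^\mu}^\lambda$ forces the deficit $\theta(x)<0$, while you reach the same conclusion by an explicit two-case analysis on whether $x=\PP_A(x)$.
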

\begin{proof}
Fix $y \in A\cap B$. For any $x\in \HH$, we have from Theorem~\ref{thm:sqne}:
\begin{align*}
\|\mathcal{R}_A^\gamma(x)-y\|^2 &\leq \gamma(\gamma-2)\|x-\PP_A(x)\|^2 + \|x-y\|^2\\
\text{and}\quad \|\mathcal{R}_B^\mu \mathcal{R}_A^\gamma(x)-y\|^2 &\leq \mu(\mu-2) \|\PP_B \mathcal{R}_A^\gamma (x)-\mathcal{R}_A^\gamma(x)\|^2 + \|\mathcal{R}_A^\gamma(x)-y\|^2.
\end{align*}
Combining these two inequalities yields
\begin{align}
\|\mathcal{R}_B^\mu \mathcal{R}_A^\gamma(x)-y\|^2 &\leq \theta(x) + \|x-y\|^2 \nonumber\\
\text{where}\quad \theta(x) &=\mu(\mu-2) \|\PP_B \mathcal{R}_A^\gamma (x)-\mathcal{R}_A^\gamma(x)\|^2 + \gamma(\gamma-2)\|x-\PP_A(x)\|^2. \label{eqn:RBRA}
\end{align}
By convexity of $\|\cdot - y\|^2$,
\begin{align}
\|\mathcal{T}_{A^\gamma,B^\mu}^\lambda (x)-y\|^2 &= \|\left(\lambda \mathcal{R}_B^\mu \mathcal{R}_A^\gamma(x)+(1-\lambda)x\right)-y\|^2 \nonumber\\
&\leq \lambda \|\mathcal{R}_B^\mu \mathcal{R}_A^\gamma(x)-y\|^2+(1-\lambda)\|x-y\|^2.\label{eqn:convex}
\end{align}
Combining \eqref{eqn:convex} with \eqref{eqn:RBRA} yields
\begin{align}
\|\mathcal{T}_{A^\gamma,B^\mu}(x)-y\|^2 &\leq \lambda \left(\theta(x) + \|x-y\|^2 \right) + (1-\lambda)\|x-y\|^2 = \lambda\theta(x)+\|x-y\|^2. \label{T_theta}
\end{align}
Now notice that \eqref{T_theta} implies the quasinonexpansiveness of $\mathcal{T}_{A^\gamma,B^\mu}^\lambda$, since $\theta(x) \leq 0$ if $x \notin \Fix \mathcal{T}_{A^\gamma,B^]\mu}^\lambda \supset A \cap B$. If we additionally have $\lambda \in (0,1]$ and $\mu,\gamma \in (0,2)$, then $(x \notin \Fix \mathcal{T}_{A^\gamma,B^\mu}^\lambda) \implies \theta(x)<0$, which shows the strict quasinonexpansivity. 

Now we have that
\begin{equation*}
0 \leq \|x_{n+1}-y\|^2 \leq \|x_0-y\|^2 + \lambda \sum_{j=0}^n \theta(x_j).
\end{equation*}
Since $\gamma(2-\gamma)\leq0$ and $\mu(2-\mu)\leq 0$, we have $\theta(x_j) \leq 0 \; \forall j$. Since $\sum_{j=0}^\infty \theta(x_j)$ is a sum of nonpositive terms and is bounded from below, $\theta(x_j) \rightarrow 0$. In particular, let $\gamma,\mu \in (0,2)$ and we have $\gamma(2-\gamma)<0$ and $\mu(2-\mu)< 0$; combining this with the fact that $\theta(x_j) \rightarrow 0$, we obtain
\begin{align}
\underset{n\rightarrow \infty}{\lim}&\|x_n-\PP_A(x_n)\| = 0, \label{theorem_paper_18a}\\
\text{and }\underset{n\rightarrow \infty}{\lim}&\|\mathcal{R}_A^\gamma(x_n)-\PP_B \mathcal{R}_A^\gamma(x_n)\| = 0. \label{theorem_paper_18b}
\end{align}
Now since $\|x_n - \mathcal{R}_A^\gamma(x_n)\| = (2-\gamma)\| x_n - \PP_A(x_n)\|$, \eqref{theorem_paper_18a} implies that
\begin{equation}
\underset{n\rightarrow \infty}{\lim}\|x_n-\mathcal{R}_A^\gamma(x_n)\| = 0. \label{theorem_paper_18c}
\end{equation}
Now the triangle inequality yields
\begin{equation}
\|x_n-\PP_B \mathcal{R}_A^\gamma(x_n)\| \leq \|x_n-\mathcal{R}_A^\gamma(x_n)\| + \|\mathcal{R}_A^\gamma(x_n)-\PP_B \mathcal{R}_A^\gamma(x_n)\|, \label{theorem_paper_18d}
\end{equation}
and so \eqref{theorem_paper_18c} and \eqref{theorem_paper_18d} together imply
\begin{equation*}
\underset{n\rightarrow \infty}{\lim}\|x_n-\PP_B \mathcal{R}_A^\gamma(x_n)\| = 0.
\end{equation*}
This completes the proof.
\end{proof}

From Theorem~\ref{thm:main} we obtain a number of convergence results.

\begin{theorem}\label{thm:ABtozero}
	Let $\gamma, \mu \in (0,2)$ and $\lambda \in (0,1]$. Suppose that the following hold:
	\begin{enumerate}[label=(\Roman*),ref=(\Roman*)]
		\item \label{Atozero} $\displaystyle\lim_{n\to \infty}\|x_n-\mathcal{P}_A(x_n)\| = 0$ implies $\displaystyle\lim_{n\to \infty} \dm(x_n,A) =0$;
		\item \label{Btozero} $\displaystyle\lim_{n\to \infty}\|x_n-\mathcal{P}_B \mathcal{R}^\gamma_A(x_n)\|=0$ implies $\displaystyle\lim_{n\to \infty}\dm(x_n, B) =0$.  
	\end{enumerate}
	Then $(x_n)_{n\in \mathbb{N}}$ converges weakly to a point in $A\cap B$. Moreover, any one of the three conditions below guarantee that $(x_n)_{n \in \mathbb{N}}$ converges strongly to a point in $A\cap B$:
	\begin{enumerate}[label={(\roman*)}]
		\item $\mathcal{H}$ is finite dimensional \label{3p3_condi}
		\item One of $A$ or $B$ is compact. \label{3p3_condii}
		\item\label{condition:regularity} $\{A,B\}$ is \emph{$\kappa$-linearly regular} on $\mathbb{B}(\overline{x},R)$ (the ball of radius $R$ about $\overline{x}$) for some $\overline{x} \in A \cap B$, where $R$ is big enough to ensure that $x_0 \in \mathbb{B}(\overline{x},R)$ and $\kappa>0$. That is, for all $x \in \mathbb{B}(\overline{x},R)$, $\dm(x,A\cap B) \leq  \kappa\max \left \{\dm(x,A),\dm(x,B) \right \}$. \label{3p3_condiii}
	\end{enumerate}
\end{theorem}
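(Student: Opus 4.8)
The plan is to combine the asymptotic regularity estimates already proved in Theorem~\ref{thm:main} with the two hypotheses \ref{Atozero} and \ref{Btozero} to drive the iterates toward both sets, and then to upgrade this to convergence by exploiting Fej\'er monotonicity with respect to $A\cap B$.

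First I would invoke Theorem~\ref{thm:main}. Since $\gamma,\mu\in(0,2)$, part \ref{maintheorem1} gives that $\mathcal{T}_{A^\gamma,B^\mu}^\lambda$ is quasinonexpansive, and part (ii) supplies
\[
\lim_{n\to\infty}\|x_n-\mathcal{P}_A(x_n)\| = \lim_{n\to\infty}\|x_n-\mathcal{P}_B\mathcal{R}_A^\gamma(x_n)\| = 0.
\]
Feeding these into \ref{Atozero} and \ref{Btozero} yields at once $\dm(x_n,A)\to 0$ and $\dm(x_n,B)\to 0$. Because $A\cap B\subset\Fix\mathcal{T}_{A^\gamma,B^\mu}^\lambda$ and the operator is quasinonexpansive, for every $y\in A\cap B$ we have $\|x_{n+1}-y\|\le\|x_n-y\|$; thus $(x_n)_{n\in\NN}$ is Fej\'er monotone with respect to the nonempty closed convex set $A\cap B$, and is in particular bounded.

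Next I would identify the weak cluster points. If $x_{n_k}\weakly w$, then since $\dm(x_{n_k},A)\to 0$ there exist $a_k\in A$ with $\|x_{n_k}-a_k\|\to 0$, whence $a_k\weakly w$; as $A$ is closed and convex, hence weakly closed, we get $w\in A$, and the same argument with $B$ gives $w\in B$. So every weak sequential cluster point lies in $A\cap B$. Combined with Fej\'er monotonicity, the standard weak-convergence principle for Fej\'er monotone sequences (see \cite[Theorem 5.5]{BC}) delivers $x_n\weakly v$ for some $v\in A\cap B$.

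For the strong-convergence statements I would appeal to Theorem~\ref{baucom} with $C=A\cap B$, verifying one of its three equivalent conditions in each case. Under \ref{3p3_condi} weak and strong convergence coincide, so $x_n\to v$ immediately. Under \ref{3p3_condii}, say with $A$ compact, picking $a_n\in A$ with $\|x_n-a_n\|\to 0$ and passing to a subsequence along which $a_n$ converges strongly produces a strong cluster point of $(x_n)$ lying in $A\cap B$, which is condition (ii) of Theorem~\ref{baucom}. Under \ref{3p3_condiii}, Fej\'er monotonicity with respect to the point $\xb\in A\cap B$ keeps $x_n\in\mathbb{B}(\xb,R)$ for all $n$ (since $x_0\in\mathbb{B}(\xb,R)$), so the $\kappa$-linear regularity estimate gives
\[
\dm(x_n,A\cap B)\le \kappa\max\{\dm(x_n,A),\dm(x_n,B)\}\to 0,
\]
which is condition (iii) of Theorem~\ref{baucom}. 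The only genuinely delicate point is the passage to $\dm(x_n,A)\to 0$ and $\dm(x_n,B)\to 0$: the cutters $\mathcal{P}_A,\mathcal{P}_B$ need not be the metric projections, so the vanishing quantities from Theorem~\ref{thm:main} are not \emph{a priori} the distances to $A$ and $B$, and hypotheses \ref{Atozero} and \ref{Btozero} are precisely what bridge this gap; everything downstream is then the routine Fej\'er-monotone machinery.
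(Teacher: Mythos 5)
Your proposal is correct and follows essentially the same route as the paper's proof: Theorem~\ref{thm:main} together with hypotheses \ref{Atozero} and \ref{Btozero} gives $\dm(x_n,A)\to 0$ and $\dm(x_n,B)\to 0$, weak closedness of the convex sets places every weak cluster point in $A\cap B$, Fej\'er monotonicity then yields weak convergence of the whole sequence, and the three strong-convergence cases are handled exactly as in the paper via Theorem~\ref{baucom}. The only cosmetic differences are that in the compactness case you take near-best approximations $a_n\in A$ where the paper uses the metric projections $P_A(x_{k_n})$, and you make explicit the (correct, and tacit in the paper) observation that Fej\'er monotonicity keeps the iterates inside $\mathbb{B}(\overline{x},R)$ so the local regularity bound applies.
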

\begin{proof}
First we prove that the sequence is weakly convergent to $A\cap B$. Since Theorem \ref{thm:main} implies that  $\underset{n\rightarrow\infty}{\lim}\|x_n-\mathcal{P}_A(x_n)\| = \underset{n\rightarrow\infty}{\lim}\|x_n-\mathcal{P}_B \mathcal{R}^\gamma_A(x_n)\|= 0$, by assumptions \ref{Atozero} and \ref{Btozero}, we have that $\|x_{n}-P_{A}(x_{n})\|\rightarrow 0$ and $\|x_{n}-P_B(x_{n})\|\rightarrow 0$. Thus all weak cluster points of the sequence $(x_n)_{n\in\mathbb{N}}$ belong to $A$ and $B$, and so all weak cluster points of the sequence belong to $A\cap B$. By Theorem \ref{thm:main}, $\mathcal{T}^\lambda_{A^\gamma,B^\mu}$ is a quasinonexpansive operator, and $A\cap B \subseteq \Fix \mathcal{T}^\lambda_{A^\gamma,B^\mu}$, and so the sequence generated by \eqref{DRsequence} is Fej\'er monotone with respect to $A\cap B$. Since all weak cluster points belong to $A\cap B$ the whole sequence converges weakly to a point in $A\cap B$; see, for example \cite[Theorem 5.5]{Bau}. 
	\begin{enumerate}[label={(\roman*)}]
	\item This is obvious, since weak convergence implies strong in finite dimensional spaces.
	\item Suppose, without loss of generality, that $A$ is compact. Then, there exist a subsequence $(x_{k_n)_{k_n\in \mathbb{N}}}\subseteq (x_n)_{n\in\mathbb{N}}$ such that $\big(P_{A}(x_{k_n})\big)_{k_n\in \mathbb{N}}$ is strongly convergent to a point in $A$. Now, let $\bar{x}$ be the weak limit of the sequence $(x_n)_{n\in\mathbb{N}}$. Since $\bar{x}\in A\cap B$, we must have $P_{A}(x_{k_n})\rightarrow \bar{x}$. Now, 
	$$\|x_{k_n}-\bar{x}\|\leq \|x_{k_n}-P_{A}(x_{k_n})\|+\|P_{A}(x_{k_n})-\bar{x}\|\rightarrow 0,$$
	which proves that the sequence $(x_n)_{n\in\mathbb{N}}$ has a strong cluster point. By Theorem \ref{baucom} we conclude the strong convergence.
		
	\item Since $\dm(x_n, A\cap B)\leq \kappa \max \left \{\dm(x_n,A),\dm(x_n,B) \right \} \rightarrow 0$, using Theorem \ref{baucom}, we obtain the strong convergence. 
	\end{enumerate}
\end{proof}

Theorem~\ref{eg:phis} raises several natural questions. Firstly, it is evident that the conditions \ref{Atozero} and \ref{Btozero} are satisfied in the case of projections onto the constraint sets. We will give examples of other cutter methods which satisfy them in Section~\ref{sec:imp}. 

Next we show that even for a very simple setting of a singleton set $A$ it is possible to construct the constraint function in such a way that condition \ref{Atozero} does not hold, hence highlighting that this condition is essential for the result.

\begin{example}\label{eg:phis} Let $\HH = l_2$, and $A=\{0_{l_2}\}$. Note that $A$ is the zero level set of the function 
$$
f(x) = \sup_{k\in \NN} \varphi_k(x^{(k)}),
$$ 
where $x= (x^{(1)},x^{(2)},\dots, x^{(k)}, \dots)$ and 
$$
\varphi_k(t) = \max\left\{-\frac 1 k t, \frac 1 k t, k t + 1 - k, -k t +1 - k\right\}.
$$
These functions are shown in Figure~\ref{fig:phis} for $k\in \{1,\dots, 7\}$. 
\begin{figure}
{\centering \begin{overpic}[width=0.5\textwidth%,grid,tics=10
	]{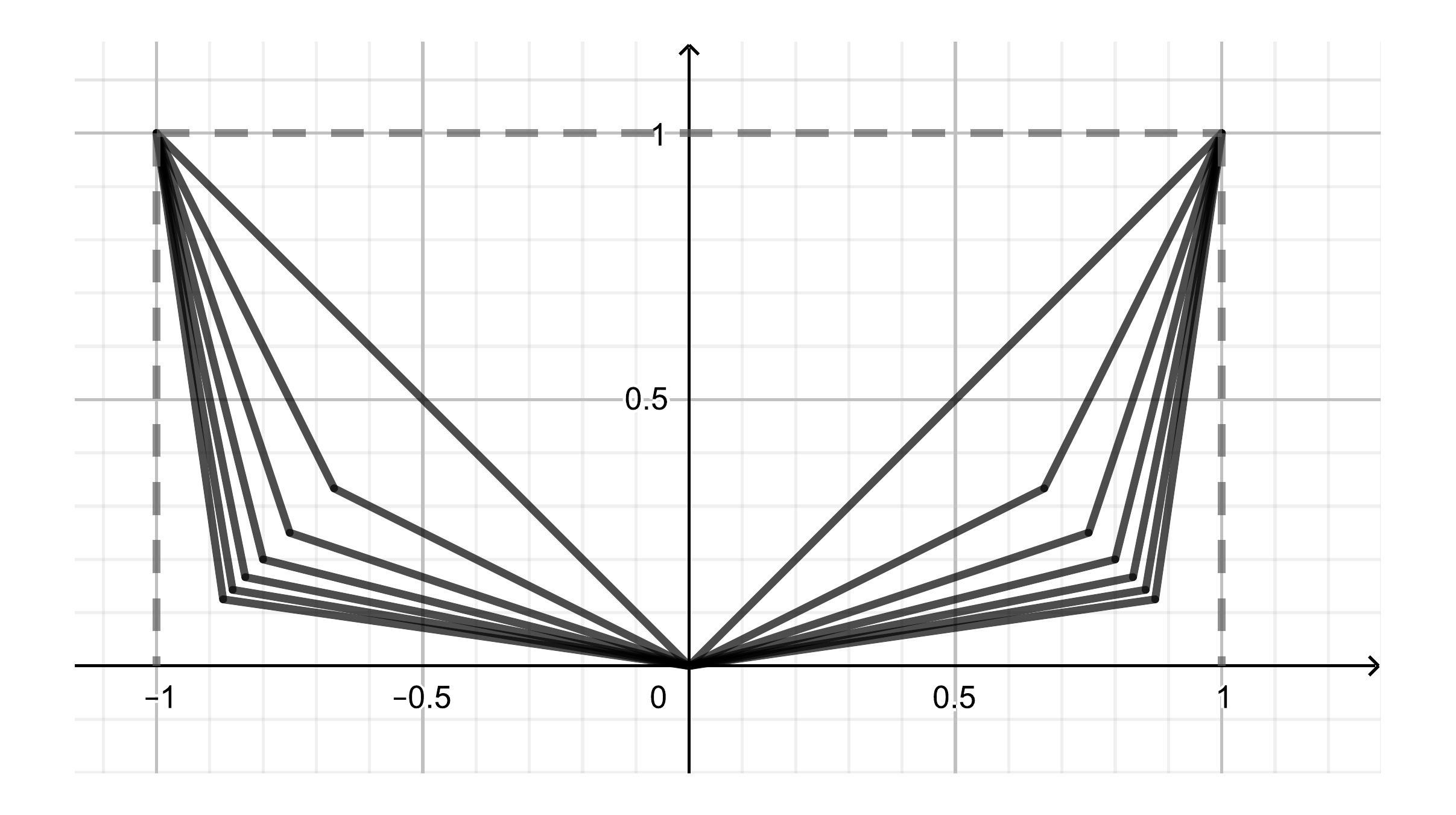}
	\put (30,30) {$\varphi_1$}
	\put (24,24) {$\varphi_2$}
	\put (60,30) {$\varphi_1$}
	\put (67,25) {$\varphi_2$}
\end{overpic}\\}	
\caption{The functions $\varphi_k$ from Example~\ref{eg:phis}.}
\label{fig:phis}
\end{figure}
Indeed, for any nozero $x\in l_2$ we have $x^{(k)}\neq 0$ for at least one $k$, then $\varphi_k(x^{(k)})>0$, and hence $f(x)>0$. At the same time, for $x=0$ we have $\varphi_k(x^{(k)})=0$ for all  $k\in \NN$, so $f(0) = 0$.

Consider the sequence $\{x_n\}$, where $x_n$ has all entries zero except for $x_n^{(n)} = 1$, so we have 
$$ x_1  = (1,0,0,0,\dots),\quad x_2  = (0,1,0,0, \dots),\quad
x_3  = (0,0,1,0, \dots),\dots . 
$$
For $|t|\leq 1/2$ we have 
$$
|t|(k^2-1)-k^2+k \leq \frac{k^2-1}{2}-k^2+k = -\frac{k^2-2 k +1}{2} = - \frac{(k-1)^2}{2}\leq 0,
$$
hence
$$
\max\left\{ \frac 1 k |t|, k |t| + 1 - k\right\} 
%= \frac 1 k t +\max\left\{0, t(k-1/k)+ 1-k\right\} 
= \frac {|t|} k  +\max\left\{0, \frac{|t|(k^2-1)-k^2+k}{k}\right\} = \frac {|t|} k,
$$
and 
$$
\varphi_k(t) = \frac 1 k |t| \quad \forall t, |t|\leq 1/2.
$$
At the same time, for $|t|\geq \frac{k}{k+1}$ we have 
$$
k |t| + 1 - k = \frac {|t|} k + \frac{k^2 -1}{k} |t| +1 - k \geq \frac {|t|} k,
$$
hence, 
$$
\max\left\{ \frac 1 k t, k t + 1 - k\right\} =  k t + 1 - k \quad \forall t,\,  |t|\geq \frac{k}{k+1},
$$
so we have for $\|u\|_{l_2}\leq \frac{1}{2n}$ that 
$$
\varphi_k(u^{(k)}) = \frac 1 k |u^{(k)}|\leq \frac 1 2 \quad \forall k \neq n, \quad 
\varphi_n(x^{(n)}+u^{(n)}) = \varphi_n(1+u^{(n)}) =  n u^{(n)} + 1\geq \frac 1 2,
$$  
hence  
$$
f(x_n+u)  = \max\{\varphi_n(x_n^{(n)}+u^{(n)}),\sup_{k\neq n}\varphi_k(u^{(k)})\} = \varphi_n(x_n^{(n)}+u^{(n)}),
$$
and so in a small neighbourhood of $x^{(n)}$ we have 
$$
f(x) = \varphi_n(x^{(n)})= n x^{(n)} + 1 - n.
$$
The subgradient cutter then gives 
$$
\|x_n - P_{\partial f}(x_n)\| = \frac{1}{n}\to 0, 
$$
however 
$$
d (x_n,A) = \|x_n\| = 1,
$$
so the condition \ref{Atozero} is violated.
\end{example}

Due to an important example by Hundal \cite{Hundal}, we know that in infinite dimensions our algorithms may fail if we don't have subtransversality or compactness. The above theorem also begs the question of what may go wrong in the case where we allow reflections $\gamma=0$ or $\mu=0$. 

\begin{example}\label{eg:fixedpoints1}
Letting $f,g:\mathbb{R} \rightarrow [0,\infty)$ by $f(x):=|x|=:g(x)$. Then every point in $\RR$ is a fixed point of $T_{f^{\gamma=0},g^{\mu=0}}$. This example is illustrated at left in Figure~\ref{fig:abs}.
\end{example}

For this example, all of the fixed points satisfy the property that $P_f(x) \in A\cap B$, which is analogous to the classical Douglas--Rachford fixed point result in Theorem~\ref{thm:LionsandMercier}. This property does not always hold, however, as illustrated in the next example.

\begin{example}\label{eg:fixedpoints2}Let $f,g:\mathbb{R}^2 \rightarrow [0,\infty)$ by $f,g:(x,y) \mapsto \max \{|x|,|y|\}$. This example is illustrated at right in Figure~\ref{fig:abs}. Any point $(x,y)$ satisfying $|x| \neq |y|$ is a fixed point of the operator $T_{f^{\gamma=0},g^{\mu=0}}$; indeed, it is possible that every point is a fixed point, depending upon how the cutter is chosen when $|x| = |y|$. If additionally, $x\neq 0$ and $y \neq 0$, then $(x,y)$ does not satisfy the property that $P_f(x) \in A\cap B$.
\end{example}

\begin{figure}[ht]
	\begin{center}
	\begin{subfigure}{.5\textwidth}
	\begin{tikzpicture}[scale=3.5]
	%DRAW the function
	\draw [black] (-1,1) -- (0,0);
	\draw [black] (0,0) -- (1,1);
	
	%DRAW x
	\draw [fill,red] (1,0) circle [radius=.015];
	\node [below,red] at (1,0) {$x_0$};
	
	%DRAW PB and arrows
	\draw [red,->] (1,.1) -- (1,.9);
	\draw [red,->] (1,.95) -- (.1,.05);
	\draw [fill,red] (0,0) circle [radius=.015];
	\node [below left,red] at (0,0) {$\mathcal{P}_A x_0=$};
	\node [below right,blue] at (0,0) {$\mathcal{P}_B \mathcal{R}_A^{\gamma=0} x_0$};
	
	%DRAW RA and arrow
	\draw [blue,fill] (-1,0) circle [radius=.015];
	\node [below,blue] at (-1,0) {$\mathcal{R}_A^{\gamma=0} x_0$};
	\draw [red,->] (-.1,0) -- (-.9,0);
	
	%DRAW RB with arrows to x_1
	\draw [blue,->] (-1,.1) -- (-1,.9);
	\draw [blue,->] (-1,.95) -- (-.1,.05);
	\draw [blue,->] (.1,0) -- (.9,0);
	
	\end{tikzpicture}
	\end{subfigure}
	\begin{subfigure}{.48\textwidth}
	\begin{adjustbox}{trim=0.3cm 0.5cm 0cm 0.8cm,clip=true}
	\begin{tikzpicture}[scale=1]
	\node[anchor=south west,inner sep=0] (image) at (0,0) {\includegraphics[width=.8\textwidth]{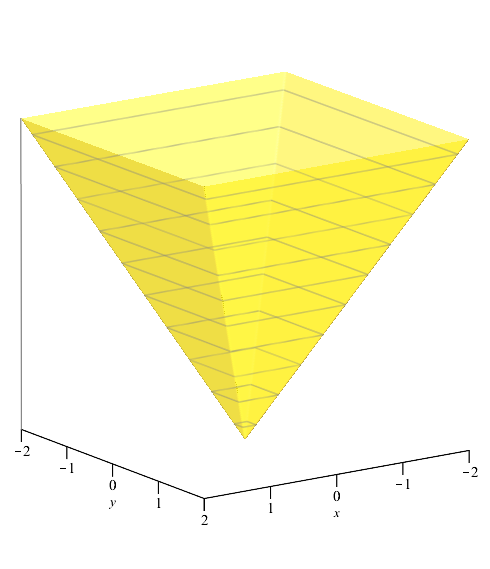}};
	\begin{scope}[x={(image.south east)},y={(image.north west)}]
	%DRAW MY POINT X0
	\draw [fill,red] (.375,.175) circle [radius=0.01];
	\node [above left,red] at (.375,.175) {$x_0$};
	
	%DRAW P_Ax and arrows
	\draw [fill,red] (.581,.205) circle [radius=0.01];
	%vertical line
	\draw [red,->] (.375,.21) -- (.375,.6);
	%tangent line down from left
	\draw [red,->] (.396,.561) -- (.56,.245);
	
	%DRAW R_Ax
	\draw [fill,red] (.787,.235) circle [radius=0.01];
	%tangent line down from right
	\draw [red,->] (.622,.211) -- (.746,.229);
	
	%DRAW P_Bx and arrows
	\draw [blue] (.787,.275) -- (.787,.555);
	\draw [cyan,dashed,->] (.787,.555) -- (.787,.665);
	\draw [cyan,dashed] (.766,.619) -- (.684,.435);
	\draw [blue,->] (.684,.435) -- (.602,.251);
	
	%DRAW R_Bx arrow
	\draw [blue,->] (.539,.199) -- (.416,.181);
	
	\end{scope}
	\end{tikzpicture}
	\end{adjustbox}
	\end{subfigure}
	
	\end{center}
	\caption{It is possible for every point to be a fixed point.}\label{fig:abs}
\end{figure}

\begin{example}
	One might also ask of the regularity conditions of Theorem~\ref{thm:ABtozero}\ref{condition:regularity} can be used to guarantee linear convergence rates, as is often the case with projection operators (see the many convergence results listed in \cite{DRsurvey}). However, Theorem~\ref{eg:phis} is for \emph{very general} cutters, and so we can construct a counterexample. 
	
	Let $A:=B:=\{0\} \subset \RR$. It is straightforward to verify that $\{A,B\}$ is $1$-linearly regular on $\mathbb{B}(0,R)$ for any radius $R$. Let $C:=\left \{1/n \; | \; n \in \ZZ \setminus \{0\} \right \}$. Now define
	$$
	\mathcal{P}:x \mapsto \begin{cases}
	0 & \text{if}\; x=0\\
	1 & \text{if}\; x>1\\
	-1 & \text{if}\; x<1\\
	1/(n+1) & \text{if}\;\; 0< x = 1/n \in C\\
	1/(n-1) & \text{if}\;\; 0> x = 1/n \in C\\
	1/(n-1) \;\; \text{for the unique $n \in \mathbb{Z}$ satisfying}\;\; 1/n < x < 1/(n-1) & \text{if} \;\;-1< x < 0 \;\text{and}\; x \notin C\\
	1/(n+1) \;\; \text{for the unique $n \in \mathbb{Z}$ satisfying}\;\; 1/(n+1) < x < 1/n & \text{if}\;\; 1> x > 0\; \text{and}\; x \notin C    
	\end{cases}
	$$
	Clearly $\mathcal{P}$ is a cutter with respect to $A$ and $B$. Set  $\mathcal{P}_A:=\mathcal{P}_B:=\mathcal{P}$, $\gamma=\mu=1$, and $\lambda = 1$. Then for $x_0 := 1$,  we have $x_n:=1/(2n+1)$, so $x_n \rightarrow 0$ with a sublinear convergence rate.
\end{example}

\section{Implementations}\label{sec:imp}

For the classic implementation of the projection method with projections onto the constraint sets, the assumptions of Theorem~\ref{thm:ABtozero} are satisfied automatically, and hence we have the following result.

\begin{corollary}\label{trueprojection}
If $\mathcal{P}_A:=P_A$ and $\mathcal{P}_B:=P_B$ are projections onto the constraint sets, then assumptions \ref{Atozero} and \ref{Btozero} in Theorem~\ref{thm:ABtozero} are satisfied, and we have the same weak convergence results.
\end{corollary}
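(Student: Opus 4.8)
The plan is to verify directly that, when $\mathcal{P}_A = P_A$ and $\mathcal{P}_B = P_B$ are the Euclidean projections onto the constraint sets, the two implications \ref{Atozero} and \ref{Btozero} of Theorem~\ref{thm:ABtozero} hold; the weak convergence conclusion then follows immediately by invoking that theorem with $\gamma,\mu \in (0,2)$ and $\lambda \in (0,1]$. Both verifications rest on the single defining property of the Euclidean projection onto a closed convex set, namely that $P_S(x)$ is the unique nearest point of $S$ to $x$, so that $\dm(x,S) = \|x - P_S(x)\| \leq \|x - z\|$ for every $z \in S$.

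For condition \ref{Atozero}, the statement is in fact an identity rather than an implication: since $\mathcal{P}_A = P_A$, we have $\|x_n - \mathcal{P}_A(x_n)\| = \|x_n - P_A(x_n)\| = \dm(x_n, A)$ for every $n$, so the hypothesis $\|x_n - \mathcal{P}_A(x_n)\| \to 0$ is literally the conclusion $\dm(x_n,A) \to 0$.

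For condition \ref{Btozero}, the implication is no longer an equality, and here the minimality of the projection does the work. The point $P_B \mathcal{R}^\gamma_A(x_n)$ is the projection of $\mathcal{R}^\gamma_A(x_n)$ onto $B$, hence in particular it belongs to $B$. Since $\dm(x_n,B)$ is the infimum of $\|x_n - z\|$ over $z \in B$, taking the feasible competitor $z = P_B \mathcal{R}^\gamma_A(x_n)$ gives
\[
\dm(x_n, B) \leq \|x_n - P_B \mathcal{R}^\gamma_A(x_n)\|,
\]
so the hypothesis $\|x_n - \mathcal{P}_B \mathcal{R}^\gamma_A(x_n)\| \to 0$ forces $\dm(x_n, B) \to 0$. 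With both \ref{Atozero} and \ref{Btozero} established, Theorem~\ref{thm:ABtozero} yields the weak convergence of $(x_n)_{n\in\NN}$ to a point of $A \cap B$.

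There is no serious obstacle in this corollary; the only point worth flagging is that conditions \ref{Atozero} and \ref{Btozero} behave asymmetrically in the projection case. Condition \ref{Atozero} holds as an exact identity, whereas \ref{Btozero} is a genuine inequality that relies only on $P_B \mathcal{R}^\gamma_A(x_n) \in B$ serving as a feasible competitor for the distance $\dm(x_n, B)$, rather than being the nearest point $P_B(x_n)$. This asymmetry is precisely what Example~\ref{eg:phis} exploits to show that \ref{Atozero} may fail for general cutters, and it is the reason the corollary must be stated for true projections.
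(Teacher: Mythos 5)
Your proof is correct and follows essentially the same route as the paper's: condition \ref{Atozero} holds because $\|x_n-P_A(x_n)\| = \dm(x_n,A)$ identically, and condition \ref{Btozero} holds because $P_B\mathcal{R}^\gamma_A(x_n)\in B$ gives the bound $\dm(x_n,B)\leq \|x_n-P_B\mathcal{R}^\gamma_A(x_n)\|$. Your added remark on the asymmetry between the two conditions and its connection to Example~\ref{eg:phis} is accurate, though not part of the paper's proof.
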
 
\begin{proof}
Since in this case $\mathcal{P}_A=P_A$,  we have 
\begin{equation}\label{eq:I-cor}
\|x_n-\mathcal{P}_A(x_n)\| =\|x_n-P_A(x_n)\| = \dm(x_n,A)\xrightarrow[n\to \infty]{} 0,
\end{equation}
hence \ref{Atozero} holds. Additionally, since $P_B R_A^\gamma(x_n) \in B$,
\begin{align}\label{eq:bound-cor}
\dm(x_n,B) \leq \|x_n-P_BR^\gamma_A(x_n)\|\xrightarrow[n\to \infty]{} 0.
\end{align}
\end{proof}

Suppose that instead of two sets $A$ and $B$ we are given a finite collection of closed convex sets $\Omega_i\subseteq \HH$, where $i\in \{1,\dots,N\}$. The feasibility problem in this case consists of finding a point $x$ such that
\begin{equation}
 x\in \Omega:= \bigcap_{i=1}^{N} \Omega_i.\notag
\end{equation}
Our two set formulation can be applied to this setting by working in the product space $\mathcal{H}^N$, and letting
$$
A:=\Omega_1 \times \dots \times \Omega_N, \qquad B:= \{x=(u_1,\dots,u_N) \,|\, u_1=u_2=\dots = u_N \},
$$
in which case the product space projections for $x=(u_1,\dots,u_N)\in \HH^N$ are
\begin{align}  P_A(x)&=P_{\Omega_1,\times\dots\times\Omega_N}(x)=(P_{\Omega_1}(u_1),\dots, P_{\Omega_N}(u_N)).\nonumber \\
P_B(x)&=\left(\frac{1}{N}\sum_{k=1}^N u_k,\dots,\frac{1}{N}\sum_{k=1}^N u_k\right).\label{eqn:agreementprojection}
\end{align}
This well-known technique is used extensively in practical applications; see the important works of Pierra and Spingarn \cite{Pierra,spingarn1983partial}. We note that even in the elementary case of alternating or cyclic projection method the convergence is much easier to study and understand in the case of two sets, and in fact there are some negative results in terms of the shape of limit sets for the infeasible case of the problem on more than two sets \cite{cominetti,crw}.

We may use cutter methods together with the product space method to solve the \emph{system of inequalities} expressed in feasibility form as 
$$
x \in \bigcap_{i=1}^N \lev_{\leq 0}f_i
$$ 
For example, one may employ \emph{subgradient projections} with the cutter operators $P_{\partial f_i}$ defined by \eqref{eq:subgr}. From now on, we work in the Euclidean setting, letting $\mathbb{E}$ represent a finite dimensional Hilbert space. We first prove the convergence for the special case of two convex functions.

\begin{corollary}\label{subgradientpro}
Let $A:=\lev_{\leq 0}f$ and $B:=\lev_{\leq 0}g$ where $f:\mathbb{E} \rightarrow \RR$ and $g:\mathbb{E} \rightarrow \RR$ are convex functions with full domain. Suppose that $A\cap B \ne \emptyset$. %Let $x_0\in\RR^N$ be such that the closed ball $\mathbb{B}'\left(P_{A\cap B}(x_0),\dm(x_0, A\cap B)\right) \subseteq \inte(\dom f)\cap \inte(\dom g)$. 
Then the sequence $(x_n)_{n\in\NN}$ generated by  $x_{n+1} := T_{f^\gamma,g^\mu}^\lambda(x_n)$ with $\gamma,\mu \in (0,2)$ converges strongly to a point $\bar{x}\in A\cap B$.
\end{corollary}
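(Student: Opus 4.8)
The plan is to deduce the result from Theorem~\ref{thm:ABtozero} by verifying its two structural hypotheses \ref{Atozero} and \ref{Btozero} for the subgradient-projection cutters $\mathcal{P}_A = P_{\partial f}$ and $\mathcal{P}_B = P_{\partial g}$, and then to upgrade the resulting weak convergence to strong convergence using that $\mathbb{E}$ is finite dimensional, i.e.\ condition \ref{3p3_condi}. By Theorem~\ref{thm:main} the operator $T_{f^\gamma,g^\mu}^\lambda$ is quasinonexpansive with $A\cap B\subseteq \Fix T_{f^\gamma,g^\mu}^\lambda$, so the generated sequence $(x_n)$ is Fej\'er monotone with respect to the nonempty set $A\cap B$ and hence bounded; I would use this boundedness throughout.

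The heart of the argument is a single observation, which is exactly the place where finite dimensionality is indispensable (and where the construction of Example~\ref{eg:phis} breaks down). Let $h\in\{f,g\}$ and let $(z_n)$ be any bounded sequence with $\|z_n - P_{\partial h}(z_n)\|\to 0$; I claim $\dm(z_n,\lev_{\leq 0}h)\to 0$. Since $h$ is convex and finite-valued on the finite-dimensional space $\mathbb{E}$, it is locally Lipschitz, so on any ball containing $(z_n)$ its subgradients are bounded in norm by some constant $L$. The defining formula \eqref{eq:subgr} gives the identity $\max\{h(z_n),0\} = \|s(z_n)\|\,\|z_n-P_{\partial h}(z_n)\| \leq L\,\|z_n-P_{\partial h}(z_n)\|\to 0$. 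If $\dm(z_n,\lev_{\leq 0}h)\not\to 0$, then along a subsequence $\dm(z_{n_k},\lev_{\leq 0}h)\geq\varepsilon$; extracting a convergent sub-subsequence $z_{n_{k_j}}\to\bar z$ (possible since $(z_n)$ is bounded and $\dim\mathbb{E}<\infty$) and using continuity of $h$ yields $h(\bar z)=\lim_j h(z_{n_{k_j}})\leq 0$, so $\bar z\in\lev_{\leq 0}h$, which contradicts $\dm(z_{n_k},\lev_{\leq 0}h)\geq\varepsilon$. I expect this observation to be the main obstacle: it is precisely the local boundedness of the subgradients together with the finite-dimensional compactness step, both unavailable in the infinite-dimensional setting of Example~\ref{eg:phis}, that make it go through.

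With this observation in hand the verification is routine. Condition \ref{Atozero} is the claim applied to $h=f$ and $z_n=x_n$. For \ref{Btozero} I would set $z_n:=\mathcal{R}_A^\gamma(x_n)$; since $\gamma\in(0,2)$, Theorem~\ref{thm:main} in the form \eqref{theorem_paper_18c} gives $\|x_n-z_n\|\to 0$, so $(z_n)$ is bounded and, under the premise of \ref{Btozero}, $\|z_n-P_{\partial g}(z_n)\|\leq\|z_n-x_n\|+\|x_n-P_{\partial g}\mathcal{R}_A^\gamma(x_n)\|\to 0$. The claim then gives $\dm(z_n,B)\to 0$, and since $\dm(\cdot,B)$ is $1$-Lipschitz, $\dm(x_n,B)\leq\dm(z_n,B)+\|x_n-z_n\|\to 0$. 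Thus both \ref{Atozero} and \ref{Btozero} hold, Theorem~\ref{thm:ABtozero} yields weak convergence of $(x_n)$ to a point of $A\cap B$, and finite dimensionality via condition \ref{3p3_condi} promotes this to strong convergence, completing the proof.
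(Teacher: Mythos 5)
Your proof is correct, and its skeleton coincides with the paper's: both arguments reduce the corollary to Theorem~\ref{thm:ABtozero} by verifying \ref{Atozero} and \ref{Btozero} for the subgradient cutters, both get boundedness of $(x_n)$ from Fej\'er monotonicity via Theorem~\ref{thm:main}, both use local boundedness of subgradients of a finite convex function on $\mathbb{E}$ (the paper cites Rockafellar) to convert $\|z_n-P_{\partial h}(z_n)\|\to 0$ into $\max\{h(z_n),0\}\to 0$, and both conclude strong convergence from condition \ref{3p3_condi}. Where you genuinely diverge is in the key lemma that vanishing function values force vanishing distance to the level set. You argue by sequential compactness: if $\dm(z_{n_k},\lev_{\leq 0}h)\geq \varepsilon$ along a subsequence, a convergent sub-subsequence has a limit in $\lev_{\leq 0}h$ by continuity, a contradiction. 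The paper instead runs a quantitative level-set argument: it restricts $f$ to a ball $D$ containing the iterates, sets $S=A'+\mathbb{B}(0,\epsilon)$ with $A'=A\cap D$, takes $\zeta=\min_{x\in\bdry S}f|_D(x)>0$, and shows by a convexity (line-segment) argument that $\lev_{\leq\zeta}f|_D\subset S$, so $f(x_n)<\zeta$ eventually forces $\dm(x_n,A)\leq\epsilon$. Your route is shorter and makes the role of finite dimensionality maximally explicit---you correctly identify it as exactly what fails in Example~\ref{eg:phis}---while the paper's route buys an explicit sublevel threshold $\zeta$ for each $\epsilon$, i.e.\ a uniform localization of sublevel sets rather than a bare contradiction, which is the kind of statement one would want for error bounds or for settings where bounded sets are not compact but coercivity-type reasoning survives. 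A small bonus of your write-up: in verifying \ref{Btozero} you measure the residual at the shifted point $z_n=\mathcal{R}_A^\gamma(x_n)$, so your triangle inequality involves $P_{\partial g}$ at a single argument, and combining with $\|x_n-z_n\|\to 0$ from \eqref{theorem_paper_18c} gives $\dm(x_n,B)\to 0$ cleanly; the paper's corresponding display bounds $\|x_n-P_{\partial g}(x_n)\|$ by quantities involving $P_{\partial g}\mathcal{R}_{\partial f}^\gamma(x_n)$, which as literally written compares the cutter at two different points (and cutters are not nonexpansive)---your version is the careful rendering of the estimate the paper intends.
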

\begin{proof}
By Theorem \ref{thm:main}\ref{maintheorem1}, we have that $(x_n)_{n\in\NN}$ is Fej\'er monotone with respect to $A\cap B$. Thus $(x_n)_{n\in\NN}$ is bounded. 
%$\subset \mathbb{B}\left(P_{A\cap B}(x_0),\dm(x_0, A\cap B)\right)$.%, then for all $n\in\NN$: $f(x_n),g(x_n)<\infty$ and $\partial f(x_n),\partial g(x_n)\ne \emptyset$, and so the sequence is well defined.

First we will prove that the conditions \ref{Atozero} and \ref{Btozero} in Theorem~\ref{thm:ABtozero} are satisfied. That is, $\|x_n-P_{\partial f}(x_n)\|\rightarrow 0$ implies that $\dm(x_n,A)\rightarrow 0$, and $\|x_n-{P}_{\partial g} {R}_{\partial f}^\gamma(x_n)\|\rightarrow 0$ implies that $\dm(x_n,B)\rightarrow 0$. Note that
$$\|x_n -P_{\partial f}(x_n)\|=\Big\|x_n-\Big(x_n-\frac{f(x_n)}{\|s(x_n)\|^2}s(x_n)\Big)\Big\|=\frac{|f(x_n)|}{\|s(x_n)\|},$$
where $s(x_n) \in \partial f(x_n)$. Since the sequence $(x_n)_{n\in\NN}$ is bounded, and $f$ has full domain, we have that the sequence $\|s(x_n)\|_{n\in\NN}$ is bounded (see, for example, \cite[Theorem 24.7]{Roc70}). Since  $\frac{f(x_n)}{\|s(x_n)\|}\rightarrow 0$, we have that $f(x_n)\rightarrow 0$. 

 We will show that $f(x_n)\rightarrow 0 \implies \dm(x_n,A) \rightarrow 0$. Let 
 $$
 D:=\mathbb{B}(P_{A \cap B}(x_0),\|x_0-P_{A \cap B}(x_0)\|).
 $$
 If $\|x_0-P_{A \cap B}(x_0)\| = 0$ then we are done. Suppose then that $\|x_0-P_{A \cap B}(x_0)\| >0$. Let $\|x_0-P_{A \cap B}(x_0)\| >\epsilon >0$. 
 
 Since $x_n$ is Fej\'er monotone with respect to $A\cap B$, we have that $x_n \in D$ for all $n$. Thus we may work with a restriction of $f$:
 $$
 f|_{D}: x \mapsto \begin{cases}
 f(x) & \text{if} \;\; x \in D;\\
 \infty & \text{otherwise},
 \end{cases}
 $$
 which is convex and coercive and satisfies $f|_{D}(x_n)=f(x_n)$ for all $n$, as well as $A':=A \cap D = \lev_{\leq 0} f|_D$.
 
 Without loss of generality let $0 \in A'$. Let $S:= A'+\mathbb{B}(0,\epsilon)$. As $A'$ is bounded, $S$ is bounded. The condition $\|x_0-P_{A \cap B}(x_0)\| >\epsilon$ ensures that $\bdry S \cap D \neq \emptyset$. As $f|_{D}$ is proper, continuous, and $\bdry S$ is closed and bounded with $\bdry S \cap D \neq \emptyset$, $f|_{D}$ attains a minimum on $\bdry S$. Let $\zeta := {\rm min}_{x \in \bdry S}{f|_D(x)}$. We will show $\lev_{\leq \zeta}f|_{D} \subset S$. Suppose by way of contradiction that there exists $y \in \lev_{\leq \zeta}f|_{D} \setminus S$. Then since $\mathbb{B}(0,\epsilon) \subset S$, we have that $y = \frac{1}{\lambda} u$ for some $u \in \bdry S$ and $\lambda \in [0,1]$. Thus we have $u = \lambda y + (1-\lambda)0$. This yields
 \begin{equation}\label{proving4p2}\zeta \leq f|_{D}(u) = f|_{D}(\lambda y + (1-\lambda)0) \leq \lambda f|_{D}(y)+(1-\lambda)f|_{D}(0) \leq \lambda f|_{D}(y) \leq \lambda \zeta\end{equation} 
 where the first inequality is how we have defined $\zeta$, the first equality is from how we have defined $u$, the second inequality is from convexity of $f|_D$, the third is because $0 \in A'=\lev_{\leq 0}f|_{D}$, and the final inequality is because $0<f|_{D}(y) \leq \zeta$. From \eqref{proving4p2}, we have $\zeta \leq \lambda \zeta$, which is true only if $\lambda =1$ or $\zeta = 0$. If $\zeta = 0$, then $y \in \lev_{\leq 0}f|_D \subset S$, a contradiction. If $\lambda = 1$ then $y=u \in S$, a contradiction. Thus $\lev_{\leq \zeta}f|_D \subset S$. Since $f|_D(x_n) \rightarrow 0$, there exists $N \in \NN$ such that for all $n \geq N$, $f|_D(x_n) < \zeta$ and so $x_n \in \lev_{\leq \zeta} f|_D \subset S$ and so $\dm(A,x_n) \leq \dm(A',x_n) \leq \epsilon$. Thus $\dm(A,x_n) \rightarrow 0$.
 
 Turning to the function $g$, with the same argument that is in Corollary \ref{trueprojection} we have
 $$\|x_n-P_{\partial g}(x_n)\|\leq \|x_n-P_{\partial g}R_{\partial f}^\gamma(x_n)\|+(2-\gamma)\|P_{\partial f}(x_n)-x_n\| \rightarrow 0.$$
 Thus, by the same arguments we used to show $f(x_n) \rightarrow 0$, we have that  $g(x_n)\rightarrow 0$ and that $\dm(x_n,B)\rightarrow 0$.
 
 Together with the fact that $x_n$ is Fej\'{e}r monotone with respect to $A \cap B$ and the fact that $\mathbb{E}$ satisfies condition \ref{3p3_condi} from Theorem~\ref{thm:ABtozero}, we conclude by Theorem \ref{baucom} the convergence of the sequence for a point in $A\cap B$. 
\end{proof}

Now we present a result for the case of more than two functions. 
\begin{corollary}
Let the $(f_i)_{i\in \mathcal{I}}$ where $\mathcal{I}=\{1,2,\cdots,N\}$, $N\in\NN$, are convex functions from $\mathbb{E}$ to $\RR$. Consider for all $i\in\mathcal{I}$, the sets $A_i:=\{x\in \mathbb{E}: f_i(x)\leq 0\}$, and suppose that $C:=\cap_{i\in\mathcal{I}}A_i\neq \emptyset$. Consider the functions 
$$F: \mathbb{E}^N\rightarrow \RR \ \ \textup{ defined by: } (x_1,x_2,\cdots,x_N)\to \sum_{i\in\mathcal{I}}\max\{f_i(x_i),0\},$$
$$G:\mathbb{E}^N\rightarrow \RR \ \ \textup{ defined by: } (x_1,x_2,\cdots,x_N)\to \sum_{i\in\mathcal{I}}\Big\|x_i-\frac{1}{N}\sum_{j\in\mathcal{I}} x_j \Big\|^2.$$
Let the sequence $({\bf x}_n)_{n\in\NN}$ be as follows:
\begin{eqnarray*}
{\bf x}_0&=&(x_1^0,x_2^0,\cdots, x_N^0)\in \mathbb{E}^N,\\
{\bf x}_{n+1}&=&(x_1^{n+1},x_2^{n+1},\cdots, x_N^{n+1})=\mathcal{T}^\lambda_{F^\gamma,G^\mu}({\bf x}_n)=\mathcal{T}^\lambda_{F^\gamma,G^\mu}(x_1^n,x_2^n,\cdots,x_N^n).
\end{eqnarray*} 
 Then ${\bf x}_n \rightarrow {\bf \bar{x}}=(\bar{x},\bar{x},\cdots,\bar{x})\in D:= \Pi_{i\in \mathcal{I}}A_i $ with $\bar{x}\in \mathbb{E}$, which means that $\bar{x}\in C$.
\end{corollary}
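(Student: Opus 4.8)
The plan is to recognize the stated iteration as the standard two-set product-space reformulation of the $N$-set feasibility problem and then to invoke Corollary~\ref{subgradientpro} essentially verbatim. I would work in the finite-dimensional Hilbert space $\mathbb{E}^N$ and set $A := \lev_{\leq 0}F$ and $B := \lev_{\leq 0}G$. The operator driving the sequence is exactly $\mathcal{T}^\lambda_{F^\gamma,G^\mu}$ built from the subgradient projectors of $F$ and $G$, so once I check that $F$ and $G$ meet the hypotheses of Corollary~\ref{subgradientpro} and that $A$, $B$ are the product constraint set and the diagonal, the convergence conclusion is immediate.

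First I would verify the hypotheses of Corollary~\ref{subgradientpro} for $f := F$ and $g := G$, under the admissible ranges $\gamma,\mu\in(0,2)$ and $\lambda\in(0,1]$. Convexity and full domain of $F$ hold because each map $\mathbf{x}\mapsto\max\{f_i(x_i),0\}$ is convex (the convex $f_i$ composed with the linear coordinate projection, then with the nondecreasing convex $\max\{\cdot,0\}$) and $F$ is their sum; likewise $G$ is a convex quadratic with full domain because $G(\mathbf{x}) = \sum_{i\in\mathcal{I}}\|x_i - \tfrac1N\sum_{j}x_j\|^2 = \dm(\mathbf{x},B)^2$ is the squared distance to the diagonal subspace. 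For the zero level sets: since every summand of $F$ is nonnegative, $F(\mathbf{x})=0$ iff $f_i(x_i)\le 0$ for each $i$, giving $A = \Pi_{i\in\mathcal{I}}A_i$; and $G(\mathbf{x})=0$ iff $x_i=\tfrac1N\sum_j x_j$ for all $i$, i.e. iff all coordinates agree, so $B$ is the diagonal. Finally $A\cap B\ne\emptyset$, since any $\bar{x}\in C$ yields the diagonal point $(\bar{x},\dots,\bar{x})\in A\cap B$, and $C\ne\emptyset$ by assumption.

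With the hypotheses confirmed, Corollary~\ref{subgradientpro} delivers strong convergence of $(\mathbf{x}_n)_{n\in\NN}$ to some $\bar{\mathbf{x}}\in A\cap B$. It then remains only to translate back to the original problem: membership of $\bar{\mathbf{x}}$ in $B$ forces $\bar{\mathbf{x}}=(\bar{x},\dots,\bar{x})$ for a single $\bar{x}\in\mathbb{E}$, and membership in $A=\Pi_{i\in\mathcal{I}}A_i$ then forces $\bar{x}\in A_i$ for every $i$, that is $\bar{x}\in C$. This is exactly the asserted conclusion.

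The step I expect to demand the most care is confirming that $\mathcal{T}^\lambda_{F^\gamma,G^\mu}$ is genuinely well-defined through the subgradient projectors of Definition~\ref{def:subgradientprojector}, i.e. that the chosen subgradients do not vanish where the functions are strictly positive. For $G$ this is automatic, since $\nabla G(\mathbf{x}) = 2(\mathbf{x}-P_B\mathbf{x})$ is nonzero precisely when $G(\mathbf{x})>0$; I would note in passing that the resulting cutter is the midpoint of $\mathbf{x}$ and $P_B\mathbf{x}$ rather than $P_B\mathbf{x}$ itself, but this is harmless because Corollary~\ref{subgradientpro} only uses convexity, full domain, and the identification of the zero level set. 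For $F$ one must ensure a nonzero selection $s(\mathbf{x})\in\partial F(\mathbf{x})$ exists whenever $F(\mathbf{x})>0$, which holds because then some active index has $f_i(x_i)>0$ and contributes a nonzero subgradient block. Beyond this bookkeeping the argument is a direct appeal to the already-proved two-function corollary.
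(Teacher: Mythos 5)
Your proposal is correct and follows essentially the same route as the paper: pass to the product space $\mathbb{E}^N$, verify that $F$ and $G$ are convex with full domain and that $\lev_{\leq 0}F = \Pi_{i\in\mathcal{I}}A_i$ and $\lev_{\leq 0}G$ is the diagonal, then invoke Corollary~\ref{subgradientpro} and translate the limit back to a point of $C$. Your added checks (nonvanishing subgradients where $F,G>0$, and the observation that the subgradient cutter for $G$ is the midpoint $\tfrac12(\mathbf{x}+P_B\mathbf{x})$ rather than $P_B\mathbf{x}$) are sound refinements of details the paper passes over quickly, consistent with its remark that one may instead use the exact projection \eqref{eqn:agreementprojection} for $G$.
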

\begin{proof}
The convexity of each $f_i$ guarantees the convexity of $F$. Notice that $B:=\lev_{\leq 0}G=\{(x_1,x_2,\cdots,x_N)\in \mathbb{E}^N : x_1=x_2=\cdots =x_N\}$ is the linear subspace of agreement which we recognize from \eqref{eqn:agreementprojection}, and $G={\rm d}(B,\cdot)^2$ is actually the square of the distance function for $B$. As $G$ is the square of the distance function for a convex set, $G$ is convex. In fact, if one chooses to replace subgradient projection with respect to $G$ by Euclidean projection directly onto its zero level set, the Euclidean projection just as given in \eqref{eqn:agreementprojection}.

The algorithm is well defined because the domain of each $f_i$ is the space $\mathbb{E}$. Finally notice that $D = \lev_{\leq 0}F$. Applying Corollary \ref{subgradientpro} we have that ${\bf x}_n\rightarrow {\bf \bar{x}}:=(\bar{x},\bar{x}, \cdots, \bar{x})\in D\cap B$ where $B=\{(x_1,x_2,\cdots,x_N)\in \mathbb{E}^N : x_1=x_2=\cdots =x_N\}$.
\end{proof}
 
As an immediate consequence, we also have strong convergence in the case where we work with projections onto the constraint sets and a finite number of sets $A_1,\dots,A_N$; just let the $N$ functions be given by $f_i:=\dm_{A_i}(\cdot)$. See, for example, \cite[Ex.~2.7]{BWWX3}.

\begin{remark}[Sequences $\gamma_n,\mu_n$]
	One may take sequences $(\gamma_n)_{n \in \NN},(\mu_n)_{n \in \NN}$ and, provided that $\liminf \gamma_n(2-\gamma_n) >0$ and $\liminf \mu_n(2-\mu_n) >0$, all of the above convergence results will hold for sequence given by $x_n := \mathcal{T}_{A^{\gamma_n},B^{\mu_n}}^\lambda x_{n-1}$. Indeed, this is the usual framework of \cite{Cegielski}, although we have avoided the use of these sequences for the simplicity of exposition.
\end{remark}
 
\begin{figure}
	\begin{center}
		\begin{tikzpicture}[scale=.5]
		\node[anchor=south west,inner sep=0] (image) at (0,0) {\includegraphics[angle=90,width=.6\textwidth]{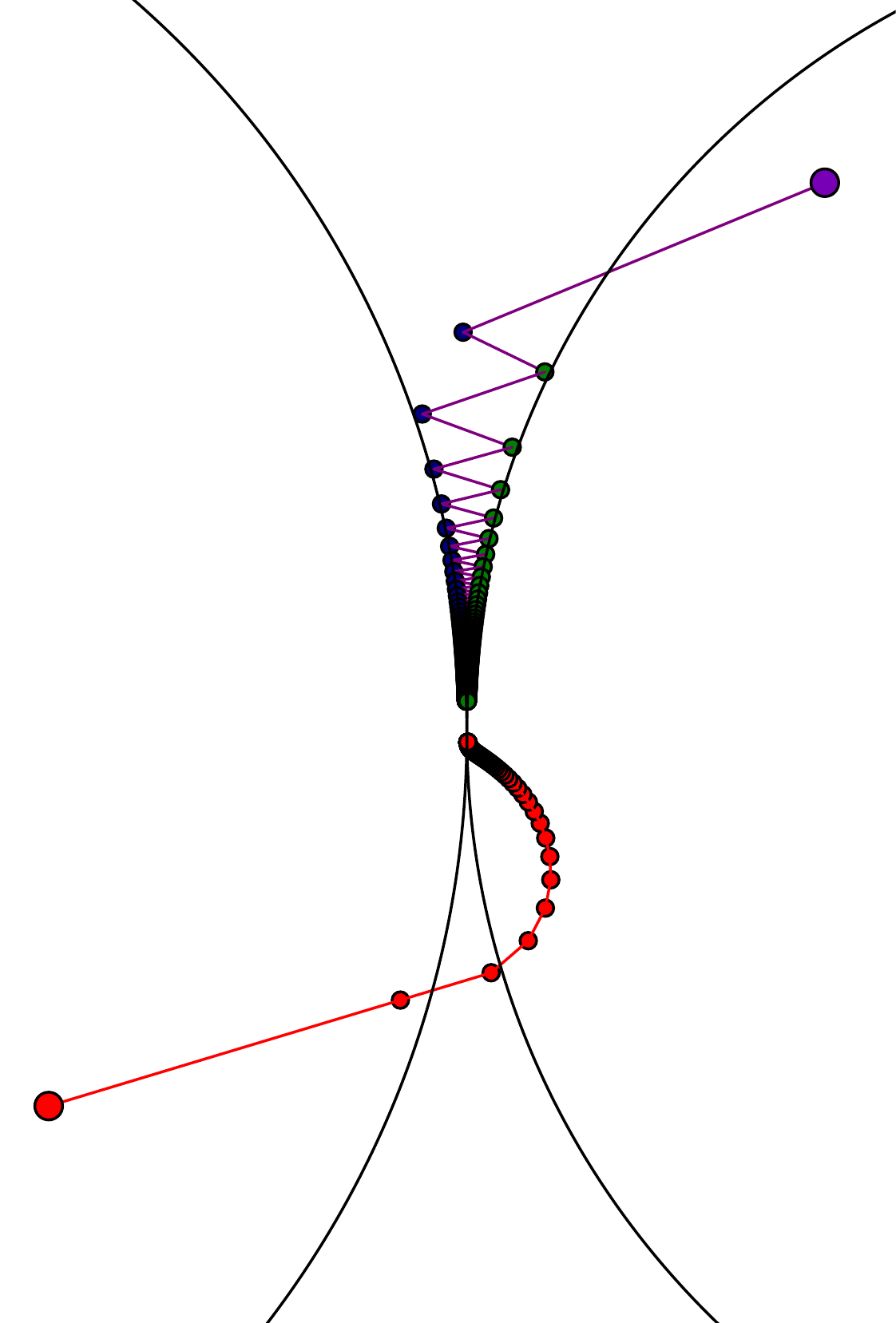}};
		\begin{scope}[x={(image.south east)},y={(image.north west)}]
		
		\node [right,purple] at (.15,.9) {$\mathcal{T}_{f^{\gamma=1},g^{\mu=1}}^{\lambda=1}$};
		
		\node [] at (.5,.8) {$\lev_{\leq 0}g$};
		
		\node [] at (.5,.3) {$\lev_{\leq 0}f$};
		
		\node [left,red] at (.8,.1) {$\mathcal{T}_{f^{\gamma=1/10},g^{\mu=1/10}}^{\lambda=\frac{1}{2}}$};
		
		\end{scope}
		\end{tikzpicture}
	\end{center}
	\caption{Convergence for $\mathcal{T}_{f^{\gamma=1/10},g^{\mu=1/10}}^{\lambda=\frac{1}{2}}$ (parameters similar to Douglas--Rachford method) vs. $\mathcal{T}_{f^{\gamma=1},g^{\mu=1}}^{\lambda=1}$ (parameters similar to alternating projections)}
	\label{fig:DRvsAP}
\end{figure}
 
\section{Discussion}

In the convex setting, when projections onto the constraint sets are replaced with cutters, the operator $\mathcal{T}_{A^\gamma,B^\mu}^\lambda$ loses firm nonexpansivity and yet retains many of its desirable convergence properties because of Fej\'{e}r monotonicity. Subgradient projections are one useful context in which the firm nonexpansivity is lost while the Fej\'{e}r monotonicity is retained. The similarities suggest several avenues of further research: one in the convex setting and one outside of it. 

\subsection{Further Investigation}

In the convex setting, the algorithmic differences corresponding to different choices of $\mu,\gamma,\lambda$ are a highly active area of investigation. See, for example, \cite{artacho2017optimal}, \cite{dao2016linear}, and \cite{dao2017generalized}. \todo{add references} Figure~\ref{fig:DRvsAP} compares two variants of $\lambda$-averaged relaxed projection methods in the case of subgradient projections, and the behaviour differences are reminiscent of those known in the setting of projections onto the constraint sets. Further comparison of behaviour for choices of averaging and relaxation parameters invites experimental investigation.

Even when the formulation of a problem allows for computations of projections onto the constraint sets, it may be undesirable (computationally expensive) to do so. Consider, for example, the projection onto an ellipse: $\mathcal{E}:= \left \{(x,y) \in \RR^2 \;|\; (x-a)^2+(y-b)^2 = 1 \right \}$ for given constants $a,b$. Computation of the exact projection for a point not in $\mathcal{E}$ requires solving a Lagrangian problem (see, for example \cite{BLSSS} and \cite{LSS}), while computation of the subgradient projection for the function $f:(x,y) \rightarrow ((x-a)^2+(y-b)^2 -1)^2$ does not. It is very natural to investigate the differences in behaviour induced by the choice of projection method.

Both the method of alternating projections and the Douglas--Rachford method have also been used to solve a variety of nonconvex feasibility problems, with the latter generally the more robust. See, for example, \cite{AB,ABT,FranMattJonGlobal,DRMxcomp,AC,Benoist,BLSSS,lamichhane2017application}, and \cite{LSS}. It is reasonable to consider the behaviour of $\lambda$-averaged relaxed projection methods in the nonconvex setting, and a very natural problem would be that of finding $x \in \lev_{\leq 0}f \cap \lev_{\leq 0}g$---using subgradient projections---where one or both of $f,g$ are not convex. Indeed, any nonconvex feasibility problem in $\RR^N$ is an example of such a nonconvex variational inequality problem where $f=\dm_A(\cdot),g=\dm_B(\cdot)$, and so much investigation has already been done. 

\todo{Vera: Mention metric regularity and results for semidefinite sets by Guoyin Li and co (Scott: I'll let you add this since I'm less familiar with it than you are)}

\subsection{Conclusion}

We learn much by comparing and contrasting what may be shown about $\lambda$-averaged relaxed generalized projection methods through the differing frameworks of firm nonexpansivity and quasi-nonexpansivity. That so many of the desirable properties carry over---from the more specific setting of projections onto the constraint sets to the more general setting of cutters---is especially useful. Splitting methods employing projections onto the constraint sets are an area of significant experimental research. We conclude by noting that those methods which employ other implementations of cutters merit further experimental investigation, and that the theory is elegant in its own regard.

\textbf{Acknowledgements}

We are grateful to Heinz Bauschke for pointing out useful references. We also extend our gratitude to the organizers and participants of the workshop \emph{Splitting Algorithms, Modern Operator Theory and Applications} held at Casa Matematica Oaxaca, Mexico, from 17--22 September 2017 and to the Australian Research Council for supporting our travel expenses via project DE150100240.

We dedicate this work to the memory of Jonathan M. Borwein, whose influence on the present authors and on the Australian mathematical community cannot be overstated.

\bibliographystyle{plain}
\bibliography{refs}

\listoftodos[Summary of to-do notes]

\end{document}